\definecolor{darkgreen}{rgb}{0,0.45,0} 
\definecolor{darkred}{rgb}{0.75,0,0}
\definecolor{darkblue}{rgb}{0,0,0.6} 
\newtheorem{thm}{Theorem}[section]
\newtheorem{lem}[thm]{Lemma}
\newtheorem{prop}[thm]{Proposition} 
\newtheorem{cor}[thm]{Corollary}
\newtheorem*{thm*}{Theorem}
\theoremstyle{definition}
\newtheorem{defn}[thm]{Definition}
\theoremstyle{remark}
\let\c@equation\c@thm
\numberwithin{equation}{section}
\newcommand{\refl}{\textup{refl}}
\newcommand{\Map}{\textup{Map}}
\newcommand{\Comp}{\textup{Comp}}
\newcommand{\res}{\textup{res}}
\newcommand{\id}{\textup{id}}
\renewcommand{\lim}{\textup{lim}}
\newcommand{\colim}{\textup{colim}}
\newcommand{\dom}{\textup{dom}}
\newcommand{\cod}{\textup{cod}}
\newcommand{\To}{\Rightarrow}
\newcommand{\Hom}{\textup{Hom}}
\newcommand{\Group}{\textup{Group}}
\newcommand{\NN}{\mathbb{N}}
\newcommand{\RR}{\mathbb{R}}
\newcommand{\ua}{\textup{ua}}
\newcommand{\cU}{\mathcal{U}}
\newcommand{\CAT}{\mathsf{CAT}}
\newcommand{\ooCAT}{\infty\text{-}\mathsf{CAT}}
\DeclareMathAlphabet{\mathbbe}{U}{bbold}{m}{n}
\newcommand{\2}{\mathbbe{2}}
\begin{document}

\title{Could \texorpdfstring{$\infty$}{infinity}-category theory be taught to undergraduates?}

    \author{Emily Riehl}
\date{\today}

\thanks{This is the author's version of
an article from the \emph{Notices of the AMS} with an expanded reference list. Tim Campion, Maru Sarazola, Mike Shulman, Dominic Verity, and Jonathan Weinberger all provided helpful suggestions on an early draft version of this article, while John Bourke suggested the rebranding of absolute lifting diagrams as relative adjunctions. Further improvements were made in response to comments from Steven Sam and three anonymous referees. The author is also grateful to receive support from the NSF via the grants DMS-1652600 and DMS-2204304, from the ARO under MURI Grant W911NF-20-1-0082, from the AFOSR under award number FA9550-21-1-0009, and from a Simons Foundation Fellowship.}

\address{Dept.~of Mathematics\\Johns Hopkins University \\ 3400 N Charles Street \\ Baltimore, MD 21218}
\email{eriehl@jhu.edu}

\begin{abstract}
    The extension of ordinary category theory to $\infty$-categories at the start of the 21st century was a spectacular achievement pioneered by Joyal and Lurie with contributions from many others. Unfortunately, the technical arguments required to solve the infinite homotopy coherence problems inherent in these results make this theory difficult for non-experts to learn. This essay surveys two programs that seek to narrow the gap between $\infty$-category theory and ordinary 1-category theory. The first leverages similarities between the categories in which 1-categories and $\infty$-categories live as objects to provide ``formal'' proofs of standard categorical theorems. The second, which is considerably more speculative, explores $\infty$-categories from new ``univalent'' foundations closely related to homotopy type theory.
\end{abstract}

\maketitle

\setcounter{tocdepth}{1}
\tableofcontents

\section{The algebra of paths}

It is natural to probe a suitably-nice topological space $X$ by means of its \emph{paths}, the continuous functions from the standard unit interval $I\coloneqq [0,1] \subset \RR$ to $X$. But what structure do the paths in $X$ form?

To start, the paths form the edges of a directed graph whose vertices are the points of $X$: a path $p \colon I \to X$ defines an arrow from the point $p(0)$ to the point $p(1)$. Moreover, this graph is \emph{reflexive}, with the constant path $\refl_x$ at each point $x \in X$ defining a distinguished endoarrow.

Can this reflexive directed graph be given the structure of a category? To do so, it is natural to define the composite of a path $p$ from $x$ to $y$ and a path $q$ from $y$ to $z$ by gluing together these continuous maps---i.e., by concatenating the paths---and then by reparametrizing via the homeomorphism $I \cong I \cup_{1=0} I$ that traverses each path at double speed:
\begin{equation}\label{eq:path-comp}
    \begin{tikzcd}
        I \arrow[r, "\cong"] \arrow[rr, bend right, "p \ast q"'] & I \cup_{1=0} I \arrow[r, "p \cup q"] & X
    \end{tikzcd}
\end{equation}
But the composition operation $\ast$ fails to be associative or unital. In general, given a path $r$ from $z$ to $w$, the composites $(p \ast q) \ast r$ and $p \ast (q \ast r)$ are not equal: while the have the same image in $X$, their parametrizations differ. However, they are \emph{based homotopic}, in the sense that there exists a continuous function $h \colon I \times I \to X$ so that
\[ h(-,0) \coloneqq (p \ast q) \ast r, \quad h(-,1) \coloneqq p \ast (q \ast r), \quad h(0,-) \coloneqq \refl_x, \quad \text{and} \quad h(1,-) \coloneqq \refl_w,\]
a situation we summarize by writing $(p \ast q) \ast r \simeq p \ast (q \ast r)$. Similarly, the paths $p \ast \refl_y \simeq \refl_x \ast p \simeq p$ are all based homotopic, though not equal (unless $p$ is the constant path).

Paths are also invertible up to based homotopy: for any path $p \colon I \to X$ from $x$ to $y$, its reversal $p^{-1} \colon I \to X$, defined by precomposing with the flipping automorphism $I \cong I$, defines an inverse up to based homotopy: $p \ast p^{-1} \simeq \refl_x$ and $p^{-1}\ast p \simeq \refl_y$.  These observations motivate the following definition:

\begin{defn} For a space $X$, the \textbf{fundamental groupoid} $\pi_1X$ is the category whose:
    \begin{itemize}
        \item objects are the points of $X$ and 
        \item arrows are based homotopy classes of paths of $X$
    \end{itemize}
    with composition defined by concatenation, identity arrows defined by the constant paths, and inverses defined by reversing paths.
\end{defn}

The fundamental groupoid of a space $X$ answers a slightly different question than originally posed, describing the structure formed by the \emph{based homotopy classes of paths} in $X$. The paths themselves form something like a weak groupoid where composition is not uniquely defined. Indeed, given paths $p$ from $x$ to $y$ and $q$ from $y$ to $z$, the composite in $\pi_1X$ is represented by \emph{any} path $s$ so that there is a based homotopy $h$ witnessing $p \ast q \simeq s$. Here, the based homotopy $h$ defines a continuous function from the solid 2-simplex into $X$ that restricts along the boundary triangle to the map defined by gluing the three paths:
\[
    \begin{tikzcd}
        \partial\Delta^2 \arrow[r, "{(p,q,s)}"] \arrow[d, hook] & X \\ \Delta^2 \arrow[ur,  "h"']
    \end{tikzcd}
\] 

While multivalued composition operations are not well-behaved in general, this one has a certain homotopical uniqueness property: the witnessing homotopies for two such composites can be glued together---by a higher dimensional analogue of the construction \eqref{eq:path-comp}---to define a homotopy $s \simeq p \ast q \simeq t$.  This suggests that the witnessing homotopies that fill the triangles formed by the paths between $x$ and $y$ and $z$ might be productively regarded as part of the composition data. Thus, the moduli space of composites of $p$ and $q$ is defined as follows:

\begin{defn}\label{defn:space-of-composites} Given composable paths $p$ and $q$ in a  space $X$, the \textbf{space of composites} of $p$ and $q$ is the subspace $\Comp(p,q) \subset \Map(\Delta^2,X)$ of continuous maps $h \colon \Delta^2 \to X$ that restrict on the boundary horn to $p \cup q$:
    \begin{equation}\label{eq:comp-space}\begin{tikzcd} \Comp(p,q) \arrow[r, hook] \arrow[d] \arrow[dr, phantom, "\lrcorner" very near start] & \Map(\Delta^2,X) \arrow[d, "\res"] \\ \ast \arrow[r, "{p \cup q}"] & \Map(I\cup_{1=0} I,X)
    \end{tikzcd}
\end{equation}
\end{defn}

The homotopical uniqueness of path composition can be strengthened by the following observation:

\begin{thm}\label{thm:unique-composition} For any composable paths $p$ and $q$ in a space $X$, the space of composites $\Comp(p,q)$ is contractible.
\end{thm}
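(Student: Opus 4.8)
\section*{Proof proposal}

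The plan is to reduce the statement to an elementary fact about the $2$-simplex, namely that the concatenated edge $I \cup_{1=0} I$ --- the inner horn $\Lambda^2_1 \subset \Delta^2$ spanned by the $1$-simplices $\{0,1\}$ and $\{1,2\}$ --- is a \emph{strong} deformation retract of the solid triangle $\Delta^2$, and then to transport that retraction through the mapping-space construction of Definition~\ref{defn:space-of-composites}.

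First I would fix a strong deformation retraction of $\Delta^2$ onto $\Lambda^2_1$: a retraction $r \colon \Delta^2 \to \Lambda^2_1$ together with a homotopy $H \colon \Delta^2 \times I \to \Delta^2$ with $H_0 = \id_{\Delta^2}$, with $H_1$ equal to the composite of $r$ with the inclusion $\Lambda^2_1 \hookrightarrow \Delta^2$, and with $H_t$ fixing $\Lambda^2_1$ pointwise for every $t$. Such a retraction exists because the pair $(\Delta^2, \Lambda^2_1)$ is homeomorphic to the pair consisting of a closed disk together with a closed sub-arc of its boundary circle, which visibly retracts onto the arc (equivalently, model the pair in barycentric coordinates and write $H$ down by an explicit formula).

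With $H$ in hand, I would read off a contraction of $\Comp(p,q)$ directly. The reparametrized (``double-speed'') composite $\phi \coloneqq (p \cup q) \circ r \colon \Delta^2 \to X$ restricts on the horn to $p \cup q$, since $r$ is the identity there, so $\phi \in \Comp(p,q)$. Now the assignment $(h,t) \mapsto h \circ H_t$ defines a continuous map $\Comp(p,q) \times I \to \Map(\Delta^2, X)$ --- continuity being the usual exponential law, unproblematic since $\Delta^2$ and $I$ are compact Hausdorff. It takes values in $\Comp(p,q)$ because $H_t$ fixes the horn and hence preserves the boundary condition; at $t=0$ it is the identity of $\Comp(p,q)$ because $H_0 = \id$; and at $t=1$ it is the constant map at $\phi$ because $H_1 = r$ followed by the inclusion. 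Thus $\Comp(p,q)$ deformation retracts onto the point $\phi$, so it is contractible.

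I expect no deep obstacle: the only things to get right are bookkeeping --- matching the concrete description $I \cup_{1=0} I$ and ``restricts on the boundary horn'' in Definition~\ref{defn:space-of-composites} with the simplicial horn $\Lambda^2_1 \subset \Delta^2$, and checking that the retraction is strong (it fixes the horn pointwise, not merely up to homotopy), which is exactly what keeps $h \circ H_t$ inside $\Comp(p,q)$. One can alternatively package the same input as: the restriction $\res \colon \Map(\Delta^2, X) \to \Map(I \cup_{1=0} I, X)$ is a Hurewicz fibration (because the horn inclusion is a closed cofibration) and a homotopy equivalence (because the horn inclusion is one), hence a trivial fibration, all of whose fibers --- including $\Comp(p,q)$, by the pullback square in \eqref{eq:comp-space} --- are contractible; but this reformulation ultimately rests on the very same deformation retraction of $\Delta^2$ onto its horn.
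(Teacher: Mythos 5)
Your proof is correct and complete; the paper itself states Theorem~\ref{thm:unique-composition} without proof, and your argument is the standard one that would be given. The key points are all in place: the horn $I \cup_{1=0} I \cong \Lambda^2_1$ is a strong deformation retract of $\Delta^2$, precomposition with the retracting homotopy $H_t$ preserves the fiber condition precisely because $H_t$ fixes the horn pointwise, and continuity follows from the exponential law for the locally compact Hausdorff source $\Delta^2$ (with the paper's standing assumption that $X$ is suitably nice). Your closing remark correctly identifies the trivial-fibration packaging as an equivalent repackaging of the same input.
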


So what do the paths in $X$ form? We have seen that they form a weak groupoid with a multivalued but homotopically unique composition law. In fact, this weak category is an infinite-dimensional category. The based homotopies $h \colon I \times I \to X$ between parallel paths $s$ and $t$ themselves might be regarded as paths $h \colon I \to \Map(I,X)$ between points $s$ and $t$ in the space $\Map(I,X)$ to which Theorem \ref{thm:unique-composition} equally applies, and these observations extend iteratively to higher-dim\-en\-sion\-al homotopies. The points, paths, and higher paths in a space $X$ assemble into a weak infinite-dimensional category---with interacting weak composition and identities for paths at all levels\footnote{Famously, the definition of a weak three-dimensional category by Gordon, Power, and Street takes six pages to state.}---in which all morphisms are weakly invertible. Such a structure is known as an $\infty$-\emph{groupoid} \cite{lurie-what}.

\subsection*{The homotopy hypothesis}

The \emph{fundamental $\infty$-groupoid} $\pi_\infty X$ of paths in $X$ captures the data of all of the higher homotopy groups of the space, information which is referred to as the \emph{homotopy type} of the space. In a famous letter to Quillen \cite{G-PS}, Grothendieck formulated his \emph{homotopy hypothesis}, positing that the fundamental $\infty$-groupoid construction defines an equivalence between homotopy types and $\infty$-groupoids. Grothendieck's vision was that this result should be provable for various models of $\infty$-groupoids that were then under development, though some instead use this thesis to \emph{define} an $\infty$-groupoid to be a homotopy type.

There is something unsatisfying, though, about the na\"{i}ve interpretation of the homotopy hypothesis as the assertion of a bijection between the collection of homotopy types and the collection of $\infty$-groupoids, or even as an equivalence between the homotopy category of spaces\footnote{Homotopy types can be understood as isomorphism classes of objects in the \emph{homotopy category of spaces}, the category obtained by localizing the category of spaces and continuous functions by the \emph{weak homotopy equivalences}, those functions that induce isomorphisms on all homotopy groups.} and the homotopy category of $\infty$-group\-oids. The disappointment lies in the fact that both spaces and $\infty$-groupoids live most naturally as the objects in weak infinite-dimensional categories, with the standard morphisms between them supplemented by higher dimensional weakly invertible morphisms. Thus, a more robust expression of the homotopy hypothesis is the assertion that the $\infty$-\emph{categories} of spaces and of $\infty$-groupoids are equivalent,\footnote{In fact, a version of this result had been proven already by Quillen for the Kan complex  model of $\infty$-groupoids \cite{Quillen}, but this was not so clearly understood at the time.}  so now we must explain what $\infty$-categories are.

\section{\texorpdfstring{$\infty$}{Infinity}-categories in mathematics}

Over the past few decades, \emph{$\infty$-categories}---weak infinite-dimensional categories with weakly invertible morphisms above dimension one---have been invading certain areas of mathematics. In derived algebraic geometry, the derived category of a ring is now understood as the 1-categorical quotient of the $\infty$-category of chain complexes, and an $\infty$-categorical property called ``stability'' explains the triangulated structure borne by the derived category \cite[\S 1.1]{lurie-algebra}. 
 In mathematical physics, Atiyah's topological quantum field theories have been ``extended up'' to define functors between $\infty$-categories \cite{Freed}.\footnote{Lurie's ``fully extended'' topological quantum field theories are also ``extended down'' so that they might be understood as functors between $(\infty,n)$-categories, with non-invertible morphisms up to and including the dimension $n$ of the indexing cobordisms. Here we reserve the term ``$\infty$-categories'' for ``$(\infty,1)$-categories,'' which have non-invertible morphisms only in the bottom dimension \cite{lurie-tqft}.}  
 $\infty$-categories have also made appearances in the Langlands program in representation theory \cite{FS} 
 and in symplectic geometry \cite{NT} 
  among other areas. Quillen's model categories \cite{Quillen} from abstract homotopy theory are now understood as presentations of $\infty$-categories.    

Ordinary categories ``frame a template for a mathematical theory,''  with the objects providing the ``nouns'' and the morphisms the ``verbs,'' in a metaphor suggested by Mazur \cite{Mazur}. 
 As the objects mathematicians study increase in complexity, a more robust linguistic template may be required to adequately describe their natural habitats---with adjectives, adverbs, pronouns, prepositions, conjunctions, interjections, and so on---leading to the idea of an $\infty$-category. Like an ordinary 1-category, an $\infty$-category has objects and morphisms, now thought of as ``1-dimensional'' transformations. The extra linguistic color is provided by higher dimensional invertible morphisms between morphisms---such as chain homotopies or diffeomorphisms---and higher morph\-isms between these morphisms, continuing all the way up.

How might a researcher in one of these areas go about learning this new technology of $\infty$-categories? And how might $\infty$-category theory ultimately be distilled down into something that we could reasonably teach advanced undergraduates of the future?

\subsection*{Curiosities from the literature}

$\infty$-categories were first introduced by Boardman and Vogt to describe the composition structure of homotopy coherent natural transformations between homotopy coherent diagrams \cite{BV}. Joyal was the first to assert that ``most concepts and results of category theory can be extended to [$\infty$-categories]'' and pioneered the development of $\infty$-categorical analogues of standard categorical notions \cite{joyal-quasi,joyal-theory}. 
  Lurie then developed various aspects of $\infty$-category theory that were needed for his thesis on derived algebraic geometry \cite{lurie-on-topoi, lurie-thesis}. 
    His books \cite{lurie-topos,lurie-algebra}
  and the online textbook \emph{Kerodon} \cite{lurie-kerodon} are primary references for uses of this technology, while texts written by Cisinski, Groth, Hinich, Rezk, and others provide parallel introductions to the field \cite{Cisinski, groth, hinich, rezk}.

If one delves further into the $\infty$-categories literature, some curiosities soon become apparent:

\begin{enumerate}
\item Particularly in talks or lecture series introducing the subject, the definition of $\infty$-category is frequently delayed, and when definitions are given, they don't always agree.
\end{enumerate}

 These competing definitions are referred to as \emph{models} of $\infty$-categories, which  are Bourbaki-style mathematical structures defined in terms of sets and functions that represent infinite-dimensional categories with a weak composition law in which all morphisms above dimension one are weakly invertible.  In order of appearance, these include \emph{simplicial categories}, \emph{quasi-categories}, \emph{relative categories}, \emph{Segal categories}, \emph{complete Segal spaces}, and \emph{1-com\-pli\-cial sets}, each of which comes with an associated array of naturally occurring examples \cite{AntolinCamarena:2013aw, Bergner:2018ht}. 
 \begin{enumerate}[resume]
\item Considerable work has gone into defining the key notions for and proving the fundamental results about $\infty$-categories, but sometimes this work is  later redeveloped starting from a different model.
\end{enumerate}
For instance, \cite{KV} begins:
\begin{quote}
In recent years $\infty$-categories or, more formally, $(\infty$,1)-categories appear in various areas of mathematics. For example, they became a necessary ingredient in the geometric Langlands problem. In his books \cite{lurie-topos,lurie-algebra} Lurie developed a theory of $\infty$-cat\-e\-gor\-ies in the language of quasi-categories and extended many results of the ordinary category theory to this setting. 

In his work \cite{rezk-CSS} Rezk introduced another model of $\infty$-cat\-e\-gor\-ies, which he called complete Segal spaces. This model has certain advantages. For example, it has a generalization to $(\infty,n)$-categories (see \cite{rezk-cartesian}).

 It is natural to extend results of the ordinary category theory to the setting of complete Segal spaces. In this note we do this for the Yoneda lemma.\end{quote}
\begin{enumerate}[resume]
\item Alternatively, authors decline to pick a model at all and instead work ``model-in\-de\-pen\-dent\-ly.''
\end{enumerate}
One instance of this appears in the precursor \cite{lurie-on-topoi} to \cite{lurie-topos}, which avoids selecting a model of $\infty$-categories at all:
\begin{quote}
We will begin in \S 1 with an informal review of the theory of $\infty$-categories. There are many approaches to the foundation of this subject, each having its own particular merits and demerits. Rather than single out one of those foundations here, we shall attempt to explain the ideas involved and how to work with them. The hope is that this will render this paper readable to a wider audience, while experts will be able to fill in the details missing from our exposition in whatever framework they happen to prefer.
\end{quote}

The fundamental obstacle to giving a uniform definition of an $\infty$-category is that our traditional set-based foundations for mathematics are not really suitable for reasoning about $\infty$-categories: sets do not feature prominently in $\infty$-categorical data, especially when $\infty$-categories are only well-defined up to equivalence, as they must be when different models are involved. When considered up to equivalence,   $\infty$-categories, like ordinary categories,  do not have a well-defined set of objects.  Essentially, $\infty$-categories are 1-categories in which all the sets have been replaced by $\infty$-groupoids.
Where a category has a set of elements, an $\infty$-category has an $\infty$-groupoid of elements, and where a category has hom-sets of morphisms, $\infty$-categories have $\infty$-groupoidal mapping spaces.\footnote{``Large'' $\infty$-categories also exist and behave like large 1-categories.}  The axioms that turn a directed graph into a category are expressed in the language of set theory: a category has a composition function satisfying axioms expressed in first-order logic with equality. By analogy, composition in an $\infty$-category can be understood as a morphism between $\infty$-groupoids, but such morphisms no longer define functions since homotopy types do not have underlying sets of points.\footnote{Similar considerations have motivated Scholze et al to use the term ``anima''---referring to the ``soul'' of a ``space''---as a synonym for $\infty$-groupoids \cite[\S1]{CS}.}
  This is why there is no canonical model of $\infty$-categories.

\subsection*{Reimagining the foundations of \texorpdfstring{$\infty$}{infinity}-category theory}

Despite these subtleties,  it is possible to reason rigorously and model-independently about $\infty$-categories without getting bogged down in the combinatorial scaffolding of a particular model. The framework introduced in \S\ref{sec:formal} considerably streamlines the basic core theory of $\infty$-categories, though its scope is currently  more limited than the corpus of results that have been proven using a model. 

However, this current state of the art employs proof techniques that are unfamiliar to non-category theorists  and thus is not feasible to integrate into the undergraduate curriculum. The concluding \S\ref{sec:synthetic} describes a more speculative dream for the future where enhancements to the foundations of mathematics would allow us to interpret uniqueness as contractibility  and automatically ensure that all constructions are invariant under equivalence.

\section{The formal theory of \texorpdfstring{$\infty$}{infinity}-categories}\label{sec:formal}

In the paper ``General theory of natural equivalences'' that marked the birth of category theory, Eilenberg and Mac Lane observed that categories, functors, and natural transformations assemble into a 2-category $\CAT$ \cite{EM}.    
 The essence of this result is the observation that natural transformations can be composed in 2-dimensions' worth of ways: ``vertically'' along a boundary functor or ``horizontally'' along a boundary category. 

More generally, Power proves that any \emph{pasting diagram} of compatibly oriented functors and natural transformations has a unique composite \cite{Power}. 
\[
\begin{tikzcd}[sep=small] & \arrow[ddr, phantom, "\scriptstyle\Downarrow\beta"]  & & D \arrow[drr, "p"] \\
A \arrow[ddr, "f"'] \arrow[drr, "g" description] \arrow[urrr, "h"] & & & & &  E \arrow[drr, "r"] \arrow[ddr, "q" description] & \arrow[dd, phantom, "\scriptstyle\Downarrow\epsilon"]  \\ \arrow[rr, phantom, "\scriptstyle\Downarrow\alpha"] 
& & C \arrow[uur, "\ell" description] \arrow[urrr, "m" description] \arrow[rrrr, phantom, "\scriptstyle\Downarrow\delta"] & \arrow[uu, phantom, "\scriptstyle\Downarrow\gamma"]   & &~ & ~ & G \\ & B \arrow[ur, "j" description] \arrow[rrrrr, "k"']  & & & & &  F \arrow[ur, "s"']
\end{tikzcd}
\]
The pasting composite, which in the example above defines a natural transformation between the categories $A$ and $G$ from the functor $rph$ to the functor $skf$, can be decomposed as a vertical composite of whiskerings of the atomic natural transformations $\alpha, \beta, \gamma,\delta,\epsilon$: for instance, the composite factors as $rp\beta$ followed by $r\gamma g$ followed by $\epsilon mg$ followed by $sqm\alpha$ followed by $s\delta f$, among 8 total possibilities. Power's theorem is that pasting composition is well-defined.

Similarly, in well-behaved models of $\infty$-categories:

\begin{prop}[Joyal, Riehl--Verity] $\infty$-categories, $\infty$-functors, and $\infty$-natural transformations assemble into a cartesian closed 2-category $\ooCAT$.
\end{prop}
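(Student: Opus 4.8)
The plan is to exhibit $\ooCAT$ as the \emph{homotopy $2$-category} of an $\infty$-cosmos. Fix one of the well-behaved models --- quasi-categories will be the guiding example, but complete Segal spaces or $1$-complicial sets work the same way --- and let $\mathcal{K}$ be the category whose objects are the $\infty$-categories of that model, with mapping objects the functor $\infty$-categories $\textup{Fun}(A,B)$. The first task is to check that $\mathcal{K}$ is enriched over quasi-categories: that $\textup{Fun}(A,B)$ is a quasi-category whenever $A$ and $B$ are $\infty$-categories. For the quasi-categorical model this is Joyal's theorem that a power $B^A$ of a quasi-category by an arbitrary simplicial set is again a quasi-category, whose crux is the combinatorial fact that the pushout-product of an inner horn inclusion with a monomorphism is inner anodyne. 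One then records the remaining $\infty$-cosmos axioms for the chosen model --- existence of a terminal object, products, and pullbacks of isofibrations, together with simplicial cotensors, and the stability of isofibrations under all of these --- each of which is a standard feature of the ambient model category. For the cartesian closure we further use that these are \emph{cartesian closed} $\infty$-cosmoi: there is an exponential $A^B$ equipped with a simplicial isomorphism $\textup{Fun}(X \times B, A) \cong \textup{Fun}(X, A^B)$, natural in all three variables, which for quasi-categories is once more a consequence of the cartesian closure of the Joyal model structure.

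Next I would pass from the quasi-categorically enriched category $\mathcal{K}$ to an honest $2$-category by change of base along the homotopy category functor $\mathrm{h}$, left adjoint to the nerve $\mathrm{N}\colon\mathsf{Cat}\to\mathsf{sSet}$, which sends an $\infty$-category to its underlying $1$-category of objects and morphisms-up-to-homotopy. The one point needing verification is that $\mathrm{h}$ preserves finite products --- a direct inspection, since $\mathrm{h}(X)$ has the vertices of $X$ as objects and $\mathrm{h}(X \times Y) \cong \mathrm{h}X \times \mathrm{h}Y$ on the nose --- so that $\mathrm{h}$ is a cartesian monoidal functor and the change of base $\mathrm{h}\mathcal{K} \coloneqq \mathrm{h}_*\mathcal{K}$ is defined. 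We set $\ooCAT \coloneqq \mathrm{h}\mathcal{K}$. By construction its objects are the $\infty$-categories, its $1$-cells are the $\infty$-functors (the vertices of the $\textup{Fun}(A,B)$), and its $2$-cells are the $\infty$-natural transformations (homotopy classes of $1$-simplices of $\textup{Fun}(A,B)$). The point of passing to homotopy classes is exactly that, although $\infty$-natural transformations compose only up to coherent higher homotopy at the $\infty$-categorical level, this strictifies the composition so that the $2$-category axioms, and in particular the interchange law, hold on the nose.

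Cartesian closure of $\ooCAT$ then follows from that of $\mathcal{K}$ by applying $\mathrm{h}$ to the defining natural isomorphisms, using product-preservation once more. The terminal object of $\mathcal{K}$ remains terminal because $\textup{Fun}(A,1) \cong 1$ gives $\ooCAT(A,1) \cong 1$; the product $A \times B$ of $\mathcal{K}$ becomes a $2$-product because the simplicial isomorphism $\textup{Fun}(X, A\times B) \cong \textup{Fun}(X,A) \times \textup{Fun}(X,B)$ yields, after applying $\mathrm{h}$, an isomorphism of hom-categories $\ooCAT(X, A\times B) \cong \ooCAT(X,A) \times \ooCAT(X,B)$ natural in $X$; and $A^B$ becomes a right $2$-adjoint to $- \times B$ because $\textup{Fun}(X \times B, A) \cong \textup{Fun}(X, A^B)$ yields $\ooCAT(X \times B, A) \cong \ooCAT(X, A^B)$ naturally in $X$.

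The main obstacle is the very first step: verifying that the chosen model actually organizes into a cartesian closed $\infty$-cosmos, and in particular that mapping objects between $\infty$-categories are again $\infty$-categories and that exponentials exist and preserve isofibrations. This is where the delicate combinatorics of the model --- for quasi-categories, Joyal's pushout-product analysis of inner anodyne maps --- genuinely intervenes, and it is precisely the part of the argument that is not ``formal.'' Once the $\infty$-cosmos structure is in hand, everything downstream, including the construction of the homotopy $2$-category and the transport of cartesian closure, is a routine exercise in enriched change of base.
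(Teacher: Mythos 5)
Your proposal is correct and follows essentially the same route the paper sketches: exhibit the chosen model as a cartesian closed quasi-categorically enriched category (an $\infty$-cosmos), then change base along the product-preserving homotopy category functor, left adjoint to the nerve, to obtain the $2$-category and transport the cartesian closure. The added detail you supply---that $\mathrm{h}$ preserves finite products, which is what makes both the change of base and the descent of the exponentials work---is exactly the point the paper leaves implicit.
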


This result was first observed in the quasi-category model by Joyal \cite{joyal-theory}. Since the category of quasi-categories is cartesian closed, it is enriched over itself, defining an $(\infty,2)$-category of quasi-categories. The 2-category of quasi-categories is obtained by a quotienting process, that maps each hom-quasi-category to its homotopy category, by applying the left adjoint to the (nerve) inclusion of categories into quasi-categories. The other ``well-behaved'' models---including complete Segal spaces, Segal categories, and 1-complicial sets among others---are also cartesian closed, so a similar construction defines a 2-category of complete Segal spaces, and so on.\footnote{While the meaning of the terms ``$\infty$-categories'' and ``$\infty$-functors'' in a given model is typically clear, the ``$\infty$-natural transformations,'' which can be understood as equivalence classes of 2-cells in the ambient $(\infty,2)$-category, are less evident.} To adopt a ``model-independent'' point of view, note that these 2-categories are all \emph{biequivalent} or ``the same'' in the sense appropriate to 2-category theory \cite[\S E.2]{RV}.

The good news, which is surprising to many experts, is that a fair portion of the basic theory of $\infty$-categories can be developed in the 2-category $\ooCAT$. These aspects might be described as \emph{formal category theory}, as they involve definitions of categorical notions such as equivalence, adjunction, limit, and colimit that can be defined internally to any 2-category. In the 2-category $\CAT$, these recover the classical notions from 1-category theory, while in the 2-category $\ooCAT$ these specialize to the correct notions in $\infty$-category theory.\footnote{The technical part of this story involves proofs that the ``synthetic'' or ``formal'' notions introduced here agree with the previously-defined ``analytic'' notions in the quasi-categories model  \cite[\S F]{RV}. But we encourage those not already acquainted with the analytic theory of some model of $\infty$-categories to learn these definitions first.} Thus, for the core basic theory involving these notions, ordinary category theory extends to $\infty$-categories simply by appending the prefix ``$\infty-$'' \cite[\S2-4]{RV}.

\subsection*{Equivalences and adjunctions}

The following definitions make sense in an arbitrary cartesian closed 2-category, such as  $\ooCAT$.

\begin{defn}\label{defn:equivalence} An \textbf{equivalence} between $\infty$-categories is given by:
    \begin{itemize}
        \item a pair of $\infty$-categories $A$ and $B$,
        \item a pair of $\infty$-functors $g \colon A \to B$ and $h \colon B \to A$, and
        \item a pair of invertible $\infty$-natural transformations $\alpha \colon \id_A \cong h g$, and $\beta \colon g h \cong  \id_B.$
  \end{itemize}
\end{defn}

\begin{defn}\label{defn:adjunction}
    An \textbf{adjunction} between $\infty$-categories is given by:
    \begin{itemize}
        \item a pair of $\infty$-categories $A$ and $B$,
        \item a pair of $\infty$-functors $u \colon A \to B$ and $f \colon B \to A$, and
        \item a pair of $\infty$-natural transformations $\eta \colon \id_B \To uf$ and $\epsilon \colon fu \To \id_A$
    \end{itemize}
    so that the following pasting identities hold:
    \[
        \begin{tikzcd}[column sep=1.5em]
            B \arrow[dr, "f"'] \arrow[rr, equals] & \arrow[d, phantom, "\scriptstyle\Downarrow\eta"] & B \arrow[dr, "f"] \arrow[d, phantom, "\scriptstyle\Downarrow\epsilon"] & & B  \arrow[d, bend left, "f"] \arrow[d, bend right, "f"'] \arrow[d, phantom, "="] & & B \arrow[d, phantom, "\scriptstyle\Downarrow\epsilon"] \arrow[rr, equals] \arrow[dr, "f" description] &\arrow[d, phantom, "\scriptstyle\Downarrow\eta"] & B \arrow[dr, phantom, "=\quad"] & B \\
            & A \arrow[ur, "u" description] \arrow[rr, equals] & ~ & A \arrow[ur, phantom, "=\quad"] & A & A \arrow[ur, "u"] \arrow[rr, equals] & ~ & A \arrow[ur, "u"'] & & A \arrow[u, bend left, "u"] \arrow[u, bend right, "u"'] \arrow[u, phantom, "="]
        \end{tikzcd} \, .
    \]
One commonly writes $\begin{tikzcd}
        A \arrow[r, bend right, "u"'] \arrow[r, phantom, "\bot"] & B \arrow[l, bend right, "f"']
    \end{tikzcd}$ 
    to indicate that $f$ is \textbf{left adjoint} to $u$, its \textbf{right adjoint}, with the data of the \textbf{unit} $\eta$ and \textbf{counit} $\epsilon$ being left implicit.
\end{defn}

Among the many advantages of using definitions that are taken ``off the shelf'' from the 2-categories literature is that the standard 2-categorical proofs then specialize to prove the following facts about adjunctions between $\infty$-categories.\footnote{What is less obvious is that these definitions are ``correct'' for $\infty$-category theory. In the case of equivalences, this is easily seen in any of the models. In the case of adjunctions, this is quite subtle, and involves the fact that the low-dimensional data enumerated in Definition \ref{defn:adjunction} suffices to determine a full ``homotopy coherent adjunction,'' uniquely up to a contractible space of choices \cite{RV-adj}.}

\begin{prop}[{\cite[2.1.9]{RV}}] Adjunctions between $\infty$-categories compose: 
    \[\begin{tikzcd}
        A \arrow[r, bend right, "u"'] \arrow[r, phantom, "\bot"] & B \arrow[l, bend right, "f"'] \arrow[r, bend right, "v"'] \arrow[r, phantom, "\bot"] & C \arrow[l, bend right, "g"'] & \rightsquigarrow & A \arrow[r, bend right, "v \circ u"'] \arrow[r, phantom, "\bot"] & C \arrow[l, "f \circ g"', bend right]
    \end{tikzcd}
    \]
\end{prop}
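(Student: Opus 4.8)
The plan is to observe that Definition \ref{defn:adjunction} is exactly the classical notion of an adjunction internal to a 2-category, so that the standard 2-categorical argument that adjunctions compose applies verbatim inside $\ooCAT$; in particular the cartesian closed structure plays no role here. Write $\eta\colon\id_B\To uf$ and $\epsilon\colon fu\To\id_A$ for the unit and counit of $f\dashv u$, and $\zeta\colon\id_C\To vg$ and $\xi\colon gv\To\id_B$ for those of $g\dashv v$.

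First I would write down the evident candidates for the unit and counit of the prospective adjunction $f\circ g\dashv v\circ u$. The unit $\id_C\To vufg$ is the vertical composite of $\zeta\colon\id_C\To vg$ with the whiskered 2-cell $v\eta g\colon vg\To vufg$, and the counit $fgvu\To\id_A$ is the vertical composite of the whiskered 2-cell $f\xi u\colon fgvu\To fu$ with $\epsilon\colon fu\To\id_A$. These are the natural candidates --- indeed essentially the only 2-cells of the required source and target that one can assemble from the given data by whiskering and vertical composition.

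Next I would check the two triangle identities for this unit--counit pair; each is an equation between well-defined pasting composites of 2-cells in $\ooCAT$. For the first, whiskering the new unit and counit around $f\circ g$ yields a composite $f\circ g\To f\circ g\circ v\circ u\circ f\circ g\To f\circ g$ involving all four of $\zeta,\eta,\xi,\epsilon$. A single application of the interchange law slides the whiskered copy of $\xi$ past the whiskered copy of $\eta$, after which the triangle identity for $g\dashv v$ straightens one zig-zag and then the triangle identity for $f\dashv u$ straightens the other, leaving $\id_{f\circ g}$. The second triangle identity is verified dually, whiskering around $v\circ u$ and using the remaining triangle identities of the two given adjunctions. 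A string-diagram picture makes the manipulation transparent: the cups and caps contributed by the two adjunctions can be pulled past one another by interchange and then straightened one at a time.

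I do not expect a genuine obstacle: the only thing requiring care is the bookkeeping of whiskerings in the pasting diagrams. The real content of the proposition is conceptual rather than technical --- because equivalences and adjunctions have been set up purely 2-categorically in Definitions \ref{defn:equivalence}--\ref{defn:adjunction}, the classical one-line proof transfers directly to $\infty$-categories, with no infinite homotopy-coherence problem intervening along the way. (One could instead deduce the statement abstractly from the fact that passing to adjoints is functorial, but the direct check is shorter.)
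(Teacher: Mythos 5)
Your proof is correct and is exactly the argument the paper has in mind: the text introduces this proposition precisely to illustrate that ``the standard 2-categorical proofs then specialize,'' and the standard proof is the one you give --- form the composite unit $v\eta g\cdot\zeta$ and counit $\epsilon\cdot f\xi u$, then verify the triangle identities by one interchange followed by the triangle identities of the two given adjunctions. Your observation that the cartesian closed structure is irrelevant here is also accurate.
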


\begin{prop}[{\cite[2.1.10]{RV}}]
Given an adjunction  $\begin{tikzcd}
    A \arrow[r, bend right, "u"'] \arrow[r, phantom, "\bot"] & B \arrow[l, bend right, "f"']
\end{tikzcd}$ between $\infty$-cat\-e\-gories 
and a functor $f' \colon B \to A$,  $f'$ is left adjoint to $u$ if and only if there exists a natural isomorphism $f \cong f'$.
\end{prop}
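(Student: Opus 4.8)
The plan is to argue entirely inside an arbitrary 2-category, using only the 2-category axioms and not cartesian closure, so that the classical 2-categorical proof applies verbatim with $\ooCAT$ in place of $\CAT$. Write $\eta\colon\id_B\To uf$ and $\epsilon\colon fu\To\id_A$ for the unit and counit witnessing that $f$ is left adjoint to $u$, so that $(\epsilon f)\cdot(f\eta)=\id_f$ and $(u\epsilon)\cdot(\eta u)=\id_u$. I would treat the two implications in turn.

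For the ``if'' direction, suppose given an invertible $\infty$-natural transformation $\theta\colon f\cong f'$. I would transport the adjunction data along $\theta$, setting
\[
\eta'\coloneqq(u\theta)\cdot\eta\colon\id_B\To uf\To uf',\qquad\epsilon'\coloneqq\epsilon\cdot(\theta^{-1}u)\colon f'u\To fu\To\id_A,
\]
and then verify the two triangle identities for the pair $(\eta',\epsilon')$. Each reduces to the corresponding identity for $(\eta,\epsilon)$: one expands the relevant composite, introduces $\theta^{-1}\cdot\theta=\id_f$ or $\theta\cdot\theta^{-1}=\id_{f'}$ at the appropriate spot, uses the interchange law to slide the whiskered copies of $\theta$ and $\theta^{-1}$ past the whiskered units and counits, and collapses what remains. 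This is routine.

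For the ``only if'' direction, suppose $u$ is also right adjoint to $f'$, with unit $\eta'\colon\id_B\To uf'$ and counit $\epsilon'\colon f'u\To\id_A$. I would produce the comparison isomorphism by the standard whiskering recipe,
\[
\theta\coloneqq(\epsilon f')\cdot(f\eta')\colon f\To fuf'\To f',\qquad\psi\coloneqq(\epsilon' f)\cdot(f'\eta)\colon f'\To f'uf\To f
\]
(so that $\psi$ is the mate of $\theta$ under the two adjunctions), and then show $\psi\cdot\theta=\id_f$ and $\theta\cdot\psi=\id_{f'}$. Expanding, say, $\psi\cdot\theta$ as a fourfold vertical composite of whiskered cells, one commutes an inner copy of $\epsilon$ past an inner copy of $\eta$ using the interchange law and then applies the triangle identities of the two adjunctions to reduce the composite to $\id_f$; the argument for $\theta\cdot\psi$ is symmetric.

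This last verification is the real content of the statement: it is precisely the classical fact that a right adjoint determines its left adjoint up to a canonical isomorphism, equivalently that the mates correspondence between 2-cells $u\To u$ and 2-cells $f'\To f$ carries invertible cells to invertible cells. Accordingly, I expect the pasting computation in the ``only if'' direction — keeping the whiskerings and the order of vertical composition straight — to be the one place where genuine care is needed; everything else is bookkeeping, and since no step invokes cartesian closure the proof yields the proposition for $\infty$-categories.
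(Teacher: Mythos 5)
Your argument is correct and is exactly the ``standard 2-categorical proof'' that the paper invokes without writing out (it defers to \cite[2.1.10]{RV}): transporting the unit and counit along the given isomorphism in one direction, and in the other constructing the comparison cells $(\epsilon f')\cdot(f\eta')$ and $(\epsilon' f)\cdot(f'\eta)$ and checking they are mutually inverse via interchange and the triangle identities. Since every step uses only the 2-category axioms, it specializes to $\ooCAT$ just as the paper intends, so there is nothing to add.
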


\begin{prop}[{\cite[2.1.12]{RV}}]     Any equivalence can be promoted to an adjoint equivalence at the cost of replacing either of the natural isomorphisms.
\end{prop}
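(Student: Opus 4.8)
The plan is to run, inside the $2$-category $\ooCAT$, the classical argument that an equivalence of categories is automatically an adjoint equivalence. This transfers without change because Definitions~\ref{defn:equivalence} and~\ref{defn:adjunction} are purely $2$-categorical, and because each representable $2$-functor $\ooCAT(X,-)$ carries an equivalence $g$ to an equivalence of categories; in particular, whiskering by such a $g$ acts on hom-categories as a fully faithful functor, hence bijectively on $2$-cells and reflecting isomorphisms.

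So, starting from equivalence data $g\colon A\to B$, $h\colon B\to A$, $\alpha\colon\id_A\cong hg$, and $\beta\colon gh\cong\id_B$, I would build an adjoint equivalence $h\dashv g$ by \emph{keeping} $\beta$ and \emph{correcting} $\alpha$. Declare the unit to be $\eta\coloneqq\beta^{-1}\colon\id_B\To gh$. Since whiskering by $g$ is a bijection on $2$-cells, there is a unique $\infty$-natural transformation $\epsilon\colon hg\To\id_A$ with $g\ast\epsilon=\beta\ast g$; take this as the counit. As $\beta\ast g$ is invertible and fully faithful functors reflect isomorphisms, $\epsilon$ is invertible, so $(g,h,\epsilon^{-1},\eta^{-1})$ is again an equivalence in the sense of Definition~\ref{defn:equivalence}. (Alternatively one can write $\epsilon$ down by an explicit zig-zag formula in the whiskerings of $\alpha^{-1}$, $\beta$, $\beta^{-1}$; the universal-property description just makes the bookkeeping lighter.)

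It then remains to verify the two triangle identities of Definition~\ref{defn:adjunction}. The identity $(g\ast\epsilon)\cdot(\eta\ast g)=\id_g$ holds by construction, since it reads $(\beta\ast g)\cdot(\beta^{-1}\ast g)=\id_g$. For the remaining identity $(\epsilon\ast h)\cdot(h\ast\eta)=\id_h$, the plan is to whisker it by $g$ on the left: by faithfulness of $\ooCAT(B,g)$ it is enough to prove the whiskered equation, and there the left-hand side becomes $(\beta\ast gh)\cdot(gh\ast\beta^{-1})$, which the interchange law trivializes once one notes that $\beta\ast gh=gh\ast\beta$ (decompose the horizontal composite $\beta\ast\beta$ in its two ways and cancel the invertible $\beta$). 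Finally, the construction is symmetric under interchanging the roles of $A,B$ and of $g,h$, which swaps $\alpha$ and $\beta$; so one may equally keep $\alpha$ and correct $\beta$, producing an adjoint equivalence $g\dashv h$. This symmetry is precisely the phrase ``at the cost of replacing either of the natural isomorphisms.''

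I do not anticipate a conceptual obstacle; the difficulty is purely organizational---keeping track of which hom-category each whiskered $2$-cell inhabits, and executing the one interchange-law computation that kills the second triangle identity after it has been pushed down along $g$. (The genuinely subtle issue lurking nearby---that this $2$-categorical notion of adjunction is the correct one for $\infty$-categories, packaging a full homotopy-coherent adjunction---is handled separately, as the accompanying footnote notes, and plays no role in this argument.)
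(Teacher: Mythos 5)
Your proof is correct and is exactly the argument the paper has in mind: it states this proposition without proof, citing \cite[2.1.12]{RV} and noting that ``the standard 2-categorical proofs then specialize,'' and your construction of the counit via full faithfulness of whiskering with $g$, followed by the interchange-law verification of the second triangle identity, is that standard 2-categorical proof carried out in $\ooCAT$.
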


\subsection*{Relative adjunctions}

The unit and counit in an adjunction satisfy a universal property in the 2-category $\ooCAT$ that we now explore in the case of the counit $\epsilon \colon fu \To \id_A$. Given any $\infty$-functors $a \colon X \to A$ and $b \colon X \to B$, pasting with $\epsilon$ defines a bijection between $\infty$-natural transformations $\alpha \colon fb \To a$ and $\beta \colon b \To ua$. Any $\alpha \colon fa \To b$ factors through a unique $\beta \colon b \To ua$ as displayed below:
\begin{equation}\label{eq:counit-abs-lifting}
    \begin{tikzcd} X \arrow[r, "b"] \arrow[d, "a"'] \arrow[dr, phantom, "\scriptstyle\forall\Downarrow\alpha"] & B \arrow[d, "f"] & X \arrow[r, "b"] \arrow[d, "a"'] \arrow[dr, phantom, "\scriptstyle\exists!\!\Downarrow\!\beta" very near start, "\scriptstyle\Downarrow\epsilon" very near end] & B \arrow[d, "f"]\\ A \arrow[r, equals] & A \arrow[ur, phantom, "="] & A \arrow[r, equals] \arrow[ur, "u" description] & A
    \end{tikzcd}
\end{equation}
which is to say that the pair $(u,\epsilon)$ defines an \emph{absolute right lifting} of $\id_A$ through $f$. Indeed, $f$ is left adjoint to $u$ with counit $\epsilon$ if and only if $(u,\epsilon)$ defines an absolute right lifting of $\id_A$ through $f$ \cite[2.3.7]{RV}.

Any component $\epsilon a$ of the counit $\epsilon$ of an adjunction satisfies a universal property analogous to \eqref{eq:counit-abs-lifting}:
\[
\begin{tikzcd} Y \arrow[r, "b"] \arrow[d, "x"'] \arrow[dr, phantom, "\scriptstyle\forall\Downarrow\alpha"] & B \arrow[d, "f"] & Y \arrow[rr, "b"] \arrow[d, "x"'] \arrow[dr, phantom, "\scriptstyle\exists!\!\Downarrow\!\beta"]  & \arrow[dr, phantom, "\scriptstyle\Downarrow\epsilon" very near end] & B \arrow[d, "f"]\\ X \arrow[r, "a"'] & A \arrow[ur, phantom, "="] & X \arrow[r, "a"'] & A \arrow[r, equals] \arrow[ur, "u" description] & A
\end{tikzcd}
\]
which asserts that $(ua,\epsilon a)$ defines an absolute right lifting of $a$ through $f$. Motivated by examples such as these, absolute right liftings $(r,\rho)$ of a generic $\infty$-functor $g \colon C \to A$ through $f \colon B \to A$ can be thought of as exhibiting $r$ as \emph{ right adjoint to $f$ relative to $g$ with partial counit $\rho$}.

\begin{defn} Given $\infty$-functors $g \colon C \to A$ and $f \colon B \to A$ with common co\-do\-main, a functor $r \colon C \to B$ and $\infty$-natural transformation $\rho \colon fr \To g$ define an \textbf{absolute right lifting} of $g$ through $f$ or a \textbf{right adjoint of} $f$ \textbf{relative to} $g$ if pasting with $\rho$ induces a bijection between $\infty$-natural transformations as displayed below:
\[
    \begin{tikzcd} X \arrow[r, "b"] \arrow[d, "c"'] \arrow[dr, phantom, "\scriptstyle\forall\Downarrow\alpha"] & B \arrow[d, "f"] & X \arrow[r, "b"] \arrow[d, "c"'] \arrow[dr, phantom, "\scriptstyle\exists!\!\Downarrow\!\beta" very near start, "\scriptstyle\Downarrow\rho" very near end] & B \arrow[d, "f"]\\ C \arrow[r, "g"'] & A \arrow[ur, phantom, "="] & C \arrow[r, "g"'] \arrow[ur, "r" description] & A
    \end{tikzcd}
\]
\end{defn}

The following lemmas about relative right adjoints admit straightforward 2-categorical proofs:

\begin{lem}[{\cite[2.3.6]{RV}}]\label{lem:absolute-restriction} If $(r,\rho)$ is right adjoint to $f \colon B \to A$ relative to $g \colon C \to A$ then for any $c \colon X \to C$, $(rc,\rho c)$ is right adjoint to $f$ relative to $gc$.
    \[
        \begin{tikzcd}&  \arrow[dr, phantom, "\scriptstyle\Downarrow\rho" very near end] & B\arrow[d, "f"] \\ X \arrow[r, "c"'] & C \arrow[r, "g"'] \arrow[ur, "r"] & A
        \end{tikzcd}
    \]
\end{lem}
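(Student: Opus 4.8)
The plan is to read the result straight off the definition of an absolute right lifting, whose universal property is tested against \emph{all} $\infty$-functors into the domain; precomposing the lifted functor $g \colon C \to A$ with $c \colon X \to C$ merely restricts attention to the test diagrams that factor through $c$, and the required bijection is inherited with essentially no work.

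In detail, unwinding what it means for $(rc,\rho c)$ to be right adjoint to $f \colon B \to A$ relative to $gc \colon X \to A$, I must show that for every $\infty$-functor $b \colon Y \to B$, every $\infty$-functor $x \colon Y \to X$, and every $\infty$-natural transformation $\alpha \colon fb \To gcx$, there is a unique $\infty$-natural transformation $\beta \colon b \To rcx$ satisfying $(\rho c)x \cdot f\beta = \alpha$. The first step is the bookkeeping identity $(\rho c)x = \rho(cx)$ (and likewise $rcx = r(cx)$ on functors), which holds on the nose because $\ooCAT$ is a genuine 2-category and whiskering a 2-cell along a composite of 1-cells agrees with whiskering along the factors in turn. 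So the equation to be solved is $\rho(cx) \cdot f\beta = \alpha$ with $\beta \colon b \To r(cx)$.

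The second and final step is to invoke the hypothesis: $(r,\rho)$ is an absolute right lifting of $g$ through $f$, so applying its universal property to the test functors $b \colon Y \to B$ and $cx \colon Y \to C$ and the 2-cell $\alpha \colon fb \To g(cx)$ produces exactly one $\beta \colon b \To r(cx)$ with $\rho(cx) \cdot f\beta = \alpha$, which is precisely what was required. Equivalently, one can argue purely diagrammatically: paste the displayed triangle in the statement onto an arbitrary test square over $gc$ and apply the bijection for $(r,\rho)$ to the resulting square over $g$.

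The proof is essentially immediate, so there is no real obstacle; the only point requiring a moment's care is the whiskering bookkeeping in the first step --- making sure that ``restrict $\rho$ along $c$, then further restrict along $x$'' is literally the single whiskered 2-cell $\rho(cx)$, and that the pasting equalities line up on the nose. Since $\ooCAT$ is a strict 2-category these coherences are automatic, and the content of the lemma is just the slogan that an \emph{absolute} lifting retains its universal property after any restriction of its base.
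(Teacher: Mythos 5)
Your proof is correct and is exactly the ``straightforward 2-categorical proof'' the paper alludes to (the paper itself defers to \cite[2.3.6]{RV} rather than printing one): the universal property of an absolute lifting quantifies over all test cones, and a cone over $gc$ is literally a cone over $g$ whose $C$-leg factors through $c$, with the strict whiskering identity $(\rho c)x=\rho(cx)$ making the two bijections coincide on the nose.
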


\begin{lem}[{\cite[2.4.1]{RV}}]\label{lem:absolute-pasting}
Suppose that $(r,\rho)$ is right adjoint to $f \colon B \to A$ relative to $g \colon C \to A$. Then in any diagram of $\infty$-functors and $\infty$-natural transformations
\[
    \begin{tikzcd} \arrow[dr, phantom, "\scriptstyle\Downarrow\sigma" very near end] & D \arrow[d, "k"] \\ \arrow[dr, phantom, "\scriptstyle\Downarrow\rho" very near end] & B \arrow[d, "f"] \\ C \arrow[r, "g"'] \arrow[ur, "r"] \arrow[uur, "s", bend left] & A
    \end{tikzcd}
\]
$(s,\sigma)$ is right adjoint to $k$ relative to $r$ if and only if $(s, \rho \cdot f\sigma)$ is right adjoint to $fk$ relative to $g$.
\end{lem}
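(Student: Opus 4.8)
The plan is to reduce the lemma to the elementary observation that if a function factors as a composite $\Theta = \Phi \circ \Psi$ with $\Phi$ a bijection, then $\Theta$ is a bijection precisely when $\Psi$ is. Each of the three instances of ``relative right adjoint'' in the statement is, by definition, the assertion that a certain ``paste with a 2-cell'' operation is a bijection on sets of $\infty$-natural transformations, and the substance of the argument is to recognize the operation attached to the conclusion as exactly such a composite.

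Concretely, I would fix a test $\infty$-category $X$ together with $\infty$-functors $c \colon X \to C$ and $d \colon X \to D$ and unwind the universal properties against this data. The hypothesis that $(r,\rho)$ is right adjoint to $f$ relative to $g$, instantiated at the test functors $kd \colon X \to B$ and $c$, supplies a bijection $\Phi \colon \{\delta \colon kd \To rc\} \xrightarrow{\sim} \{\varepsilon \colon fkd \To gc\}$, $\delta \mapsto \rho c \cdot f\delta$. The assertion that $(s,\sigma)$ is right adjoint to $k$ relative to $r$ is, by definition, that the operation $\Psi \colon \{\gamma \colon d \To sc\} \to \{\delta \colon kd \To rc\}$, $\gamma \mapsto \sigma c \cdot k\gamma$, is a bijection for all $X$, $c$, $d$; and the assertion that $(s,\rho\cdot f\sigma)$ is right adjoint to $fk$ relative to $g$ is that $\Theta \colon \{\gamma \colon d \To sc\} \to \{\varepsilon \colon fkd \To gc\}$, $\gamma \mapsto (\rho\cdot f\sigma)c \cdot fk\gamma$, is a bijection for all $X$, $c$, $d$.

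The key step is then the identity $\Theta = \Phi \circ \Psi$, which follows from functoriality of whiskering by $f$ (it preserves vertical composition) together with associativity of iterated whiskering:
\[ \Phi(\Psi(\gamma)) = \rho c \cdot f(\sigma c \cdot k\gamma) = \rho c \cdot f\sigma c \cdot fk\gamma = (\rho\cdot f\sigma)c \cdot fk\gamma = \Theta(\gamma). \]
Since $\Phi$ is a bijection by hypothesis, $\Theta$ is a bijection if and only if $\Psi$ is, and quantifying over all $X$, $c$, $d$ yields the claimed equivalence. The proof uses only the 2-categorical structure of $\ooCAT$, so it in fact holds in any 2-category. I expect the only real obstacle to be bookkeeping: keeping straight the orientations of the 2-cells prescribed by the pasting diagram and, crucially, remembering to invoke the universal property of $(r,\rho)$ at the composite test functor $kd$ rather than at an arbitrary functor $X \to B$. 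Once $\Phi$, $\Psi$, and $\Theta$ are written out with the correct whiskerings, the factorization---and hence the lemma---is immediate.
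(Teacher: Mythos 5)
Your proof is correct and is precisely the ``straightforward 2-categorical proof'' the paper alludes to (and that appears in the cited \cite[2.4.1]{RV}): both universal properties quantify over the same test data $(X,c,d)$, the hypothesis on $(r,\rho)$ is invoked at the composite $kd$, and the interchange/whiskering identity $(\rho\cdot f\sigma)c \cdot fk\gamma = \rho c \cdot f(\sigma c \cdot k\gamma)$ exhibits $\Theta$ as $\Phi\circ\Psi$ with $\Phi$ a bijection. Nothing is missing.
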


\subsection*{Limits and colimits}

Relative adjunctions can be used to define limits and colimits of diagrams valued inside an $\infty$-category $A$ and indexed by another $\infty$-category $J$. As a cartesian closed 2-category,  $\ooCAT$ contains a terminal $\infty$-category $1$, which admits a unique $\infty$-functor $! \colon J \to 1$ from any other $\infty$-category $J$. Maps $a \colon 1 \to A$ from the terminal $\infty$-category into another $\infty$-category define \emph{elements} in the $\infty$-category $A$.\footnote{We find this terminology less confusing than referring to the ``objects'' in an $\infty$-category $A$, which is itself an object of $\ooCAT$.} Using the cartesian closed structure, any diagram $d \colon J \to A$ defines an element $d \colon 1 \to A^J$ in the $\infty$-\emph{category of} $J$-\emph{shaped diagrams in} $A$. Exponentiation with the unique functor $! \colon J \to 1$ defines the \emph{constant diagrams functor} $\Delta \colon A \to A^J$, which carries an element of $A$ to the constant $J$-shaped diagram at that element.

\begin{defn}\label{defn:limit-colimit} An $\infty$-category $A$ \textbf{admits $J$-shaped limits} if the constant diagrams functor $\Delta \colon A \to A^J$ admits a right adjoint and \textbf{admits $J$-shaped colimits} if the constant diagrams functor admits a left adjoint:
    \[
        \begin{tikzcd} A \arrow[r, "\Delta" description] & A^J \arrow[l, bend left=40, "\lim", "\bot"'] \arrow[l, bend right=40, "\colim"' pos=.45, "\bot" pos=.6]
        \end{tikzcd}
    \]
\end{defn}

The counit $\lambda$ of the adjunction $\Delta\dashv\lim$ and the unit $\gamma$ of the adjunction $\colim\dashv\Delta$ encode the limit and colimit cones, as is most easily seen when considering their components at a diagram $d \colon 1 \to A^J$. By Lemma \ref{lem:absolute-restriction}, the right and left adjoints restrict to define relative adjunctions:
\[
    \begin{tikzcd}
        & \arrow[dr, phantom, "\scriptstyle\Downarrow\lambda" very near end] & A \arrow[d, "\Delta"] & & \arrow[dr, phantom, "\scriptstyle\Uparrow\gamma" very near end] & A \arrow[d, "\Delta"] \\ 1 \arrow[r, "d"'] & A^J \arrow[ur, "\lim"] \arrow[r, equals] & A^J & 1 \arrow[r, "d"'] & A^J \arrow[ur, "\colim"] \arrow[r, equals] & A^J
    \end{tikzcd}
\]
These observations allow us to generalize Definition \ref{defn:limit-colimit} to express the universal properties of the limit or colimit of a single diagram in settings where the limit and colimit functors do not exist.

\begin{defn}\label{defn:elementwise-limit-colimit} A diagram $d \colon J \to A$ between $\infty$-categories \textbf{admits a limit} just when $\Delta$ admits a relative right adjoint at $d$ and \textbf{admits a colimit} just when $\Delta$ admits a relative left adjoint at $d$, as encoded by absolute lifting diagrams:
    \[
        \begin{tikzcd}
        \arrow[dr, phantom, "\scriptstyle\Downarrow\lambda_d" very near end] & A \arrow[d, "\Delta"]  & \arrow[dr, phantom, "\scriptstyle\Uparrow\gamma_d" very near end] & A \arrow[d, "\Delta"] \\ 1 \arrow[r, "d"']  \arrow[ur, "\lim d"] & A^J & 1 \arrow[r, "d"']  \arrow[ur, "\colim d"]  & A^J
        \end{tikzcd}
    \]
\end{defn}

There is an easy formal proof of the $\infty$-categorical version of a classical theorem:

\begin{thm}
    Right adjoints preserve limits and left adjoints preserve colimits.
\end{thm}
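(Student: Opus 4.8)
The plan is to deduce the theorem from the two formal lemmas on relative adjunctions, Lemma~\ref{lem:absolute-restriction} and Lemma~\ref{lem:absolute-pasting}, applied to the adjunction $f \dashv u$ after exponentiating by the indexing $\infty$-category $J$. The colimit assertion is the exact formal dual of the limit assertion, obtained by reversing every $\infty$-natural transformation (so absolute right liftings become absolute left liftings and the right adjoint $u$ in $f\dashv u$ plays the role of a left adjoint), so it suffices to prove that right adjoints preserve limits. Thus let $u\colon A\to B$ be a right adjoint, with left adjoint $f\colon B\to A$, unit $\eta$, and counit $\epsilon$, and let $d\colon J\to A$ be a diagram admitting a limit, encoded via Definition~\ref{defn:elementwise-limit-colimit} by an absolute right lifting $(\lim d,\lambda_d)$ of $d\colon 1\to A^J$ through $\Delta_A\colon A\to A^J$. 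The goal is to show that $u\lim d$, equipped with the whiskered cone $u^J\lambda_d$, is a limit of $u^J d\colon 1\to B^J$ --- equivalently, that $(u\lim d, u^J\lambda_d)$ is an absolute right lifting of $u^J d$ through $\Delta_B\colon B\to B^J$.

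First I would set up the bookkeeping that lets the two $\infty$-categories communicate. Since $\ooCAT$ is cartesian closed, $(-)^J$ is a 2-functor, so it carries the adjunction $f\dashv u$ to an adjunction $f^J\dashv u^J$ with counit $\epsilon^J$; in particular $(u^J,\epsilon^J)$ is an absolute right lifting of $\id_{A^J}$ through $f^J$, just as $(u,\epsilon)$ is an absolute right lifting of $\id_A$ through $f$. The same cartesian closed structure furnishes the commuting squares $f^J\Delta_B = \Delta_A f$ and $u^J\Delta_A = \Delta_B u$ of $\infty$-functors, together with the coherence identity $\epsilon^J\Delta_A = \Delta_A\epsilon$ relating the exponentiated counit to the original one on constant diagrams.

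The argument then runs in two ``restrict, then paste'' steps. On the $A$-side: restricting $(u,\epsilon)$ along $\lim d\colon 1\to A$ via Lemma~\ref{lem:absolute-restriction} shows $(u\lim d,\epsilon\lim d)$ is right adjoint to $f$ relative to $\lim d$; feeding this together with the limit lifting $(\lim d,\lambda_d)$ into the pasting Lemma~\ref{lem:absolute-pasting}, with the functor $k$ there taken to be $f\colon B\to A$, shows that $(u\lim d,\ \lambda_d\cdot\Delta_A(\epsilon\lim d))$ is right adjoint to $\Delta_A f$ relative to $d$. On the $B$-side: restricting $(u^J,\epsilon^J)$ along $d\colon 1\to A^J$ via Lemma~\ref{lem:absolute-restriction} shows $(u^J d,\epsilon^J d)$ is right adjoint to $f^J$ relative to $d$, so Lemma~\ref{lem:absolute-pasting}, now with $k=\Delta_B\colon B\to B^J$, identifies the assertion ``$(u\lim d, u^J\lambda_d)$ is a limit of $u^J d$'' with the assertion ``$(u\lim d,\ \epsilon^J d\cdot f^J(u^J\lambda_d))$ is right adjoint to $f^J\Delta_B=\Delta_A f$ relative to $d$.'' A single application of the interchange law for $\epsilon^J$, followed by the coherence $\epsilon^J\Delta_A=\Delta_A\epsilon$, gives $\epsilon^J d\cdot f^J(u^J\lambda_d) = \lambda_d\cdot\Delta_A(\epsilon\lim d)$, which is exactly the relative right adjoint produced by the first step; hence the second criterion is met and $(u\lim d, u^J\lambda_d)$ is a limit of $u^J d$, as desired. (When $B$ also admits all $J$-shaped limits there is a shortcut: the $\infty$-functor $\Delta_A f = f^J\Delta_B$ acquires a right adjoint both as $u\circ\lim$ and as $\lim\circ\, u^J$ by composing the adjunctions $f\dashv u$, $\Delta_A\dashv\lim$, $f^J\dashv u^J$, $\Delta_B\dashv\lim$, and the two agree by uniqueness of adjoints.)

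The only point requiring care is the content of the second paragraph: that $(-)^J$ is a 2-functor and is compatible with the constant-diagram functors $\Delta=(-)^{!}$ and with the counit $\epsilon$. These identities are precisely the coherences guaranteed by cartesian closedness of $\ooCAT$; once they are in hand the rest is a mechanical manipulation of the two lemmas, and I expect a written-out proof to spend most of its length pinning down those squares and checking that the two cones produced by the two steps literally coincide, rather than on anything conceptually substantive --- which is why this counts as an ``easy formal proof.''
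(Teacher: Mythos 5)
Your proof is correct and follows essentially the same route as the paper's: reduce via Lemma~\ref{lem:absolute-pasting} to the pasted composite with $\epsilon^J$ at $d$, then use $f^J\Delta=\Delta f$ and $\epsilon^J\Delta=\Delta\epsilon$ (plus interchange) to identify that composite with the pasting of $(\lim d,\lambda_d)$ with the restricted counit $\epsilon\lim d$, which is a relative right adjoint by Lemmas~\ref{lem:absolute-restriction} and~\ref{lem:absolute-pasting}. Your write-up is just a more explicit, symbol-level transcription of the paper's diagrammatic argument, together with the (standard, and correct) remark that the colimit half follows by 2-cell duality.
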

\begin{proof}
    Consider an adjunction as in Definition \ref{defn:adjunction} and a limit of a diagram as in Definition \ref{defn:elementwise-limit-colimit}. To show that the limit is preserved by the right adjoint, we must show that
    \[
        \begin{tikzcd}
        \arrow[dr, phantom, "\scriptstyle\Downarrow\lambda_d" very near end] & A \arrow[d, "\Delta"]  \arrow[r, "u"] & B \arrow[d, "\Delta"] \\ 1 \arrow[r, "d"']  \arrow[ur, "\lim d"] & A^J \arrow[r, "u^J"'] & B^J
        \end{tikzcd}
    \]
    defines a relative right adjoint. By Lemma \ref{lem:absolute-pasting}, it suffices to demonstrate this after pasting with the relative right adjoint encoded by the component at $d$ of the counit $\epsilon^J$ of the adjunction $f^J \dashv u^J$, as below-left:
    \[
\begin{tikzcd}
\arrow[dr, phantom, "\scriptstyle\Downarrow\lambda_d" pos=.85] & A \arrow[d, "\Delta"]\arrow[r, "u"] & B \arrow[d, "\Delta"] & [-3pt]{~} &\arrow[dr, phantom, "\scriptstyle\Downarrow\epsilon" pos=.85] & B \arrow[d, "f"]  & \arrow[dr, phantom, "\scriptstyle\Downarrow\epsilon_{\lim d}" pos=.8] & B \arrow[d, "f"] \\  1 \arrow[ur, "\lim d"] \arrow[r, "d"'] & A^J \arrow[dr, equals] \arrow[r, "{u^J}"] & B^J \arrow[d, "{f^J}"] \arrow[dl, phantom, "{\scriptstyle\Downarrow\epsilon^J}" pos=.15] \arrow[r, phantom, "="] & ~ \arrow[dr, phantom, "\scriptstyle\Downarrow\lambda_d" pos=.85]  & A \arrow[ur, "u"] \arrow[r, equals] \arrow[d, "\Delta"] & A \arrow[d, "\Delta"]
  \arrow[r, phantom, "="] & ~ \arrow[dr, phantom, "\scriptstyle\Downarrow\lambda_d" pos=.9] & A \arrow[d, "\Delta"] \\ & ~ & A^J & 1 \arrow[r, "d"'] \arrow[ur, "\lim d"] & A^J \arrow[r, equals] & A^J & 1 \arrow[r, "d"'] \arrow[ur, "\lim d" {xshift=2pt}] \arrow[uur, bend left, "u\lim d"] & A^J
\end{tikzcd}
\]
From the 2-functoriality of the exponential in a cartesian closed 2-category,
$f^J\Delta = \Delta f$ and $\epsilon^J \Delta = \Delta \epsilon$. Hence, the pasting diagram displayed above-left equals the one displayed above-center, which equals the diagram above-right. This latter diagram is a pasted composite of relative right adjoints, and is then a relative right adjoint in its own right by Lemma \ref{lem:absolute-pasting}.
\end{proof}

\subsection*{Universal properties}

Definitions \ref{defn:adjunction} and \ref{defn:elementwise-limit-colimit} do not express the full universal properties of adjunctions between and co/limits in $\infty$-categories. These require additional structure borne by the 2-category $\ooCAT$, namely the existence of \emph{comma $\infty$-categories} for any pair of $\infty$-functors $f \colon B \to A$ and $g \colon C \to A$. The comma $\infty$-category is an $\infty$-category $\Hom_A(f,g)$ equipped with canonical functors and an $\infty$-natural transformation as below-left, formed by the pullback of $\infty$-categories below-right:

\begin{equation}\label{eq:comma-cat}
    \begin{tikzcd}
        \Hom_A(f,g) \arrow[r, "\dom"] \arrow[d, "\cod"'] \arrow[dr, phantom, "\scriptstyle\Downarrow\phi"] & B \arrow[d, "f"]  & & \Hom_A(f,g) \arrow[dr, phantom, "\lrcorner" very near start]\arrow[r, "\phi"] \arrow[d, "{(\cod,\dom)}"'] & A^\2 \arrow[d, "{(\cod,\dom)}"] \\ C \arrow[r, "g"'] & A & & C \times B \arrow[r, "g \times f"'] & A \times A
    \end{tikzcd}
\end{equation}
While the universal property of $\Hom_A(f,g)$ as an object of $\ooCAT$ is weaker than the standard notion of comma object in 2-category theory, nevertheless:

\begin{thm}[{\cite[3.5.8]{RV}}]\label{thm:commas} An $\infty$-functor $f \colon B \to A$ admits a right adjoint $r \colon C \to B$  relative to $g \colon C \to A$ if and only if there exists an equivalence over $C \times B$:
    \[ \Hom_A(f,g) \simeq_{C \times B} \Hom_B(B,r). \]
\end{thm}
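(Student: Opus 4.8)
The statement is the $\infty$-categorical incarnation of the classical criterion that $f$ has a right adjoint relative to $g$ precisely when there is a natural isomorphism of hom-sets $\Hom_A(fb, gc) \cong \Hom_B(b, rc)$, with comma $\infty$-categories playing the role of the bundled-together hom-sets. The plan is to leverage the weak universal property of comma $\infty$-categories, which in the formulation of \cite{RV} has three parts: \emph{1-cell induction} (a 2-cell $fh \To gk$ over an object $X$ induces a functor $X\to\Hom_A(f,g)$ whose comma cone restricts to it, though not uniquely), \emph{2-cell induction}, and \emph{2-cell conservativity} (a 2-cell between induced functors that restricts to an isomorphism on the comma cone is itself invertible). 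Throughout, $\Hom_B(B,r)$ abbreviates $\Hom_B(\id_B, r)$, and I write $\phi \colon f\,\dom \To g\,\cod$ and $\psi \colon \dom' \To r\,\cod'$ for the comma cones of $\Hom_A(f,g)$ and $\Hom_B(B,r)$.

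For the forward implication, suppose $(r,\rho)$ is a right adjoint of $f$ relative to $g$. First I would build a functor $\Hom_B(B,r)\to\Hom_A(f,g)$ over $C\times B$ by applying 1-cell induction to the pasted cone $\rho\,\cod'\cdot f\psi \colon f\,\dom' \To g\,\cod'$, and a functor $\Hom_A(f,g)\to\Hom_B(B,r)$ over $C\times B$ by applying 1-cell induction to the 2-cell $\bar\phi \colon \dom\To r\,\cod$ obtained by transposing $\phi$ across the relative-adjunction bijection, so that $\rho\,\cod\cdot f\bar\phi = \phi$. Composing these two functors in either order gives an endofunctor over $C\times B$ whose induced comma cone agrees, up to the canonical isomorphisms supplied by 1-cell induction, with that of the identity functor, because $\beta\mapsto\rho c\cdot f\beta$ and its inverse cancel; 2-cell induction then produces a comparison 2-cell to the identity restricting to an isomorphism on the comma cone, and 2-cell conservativity promotes it to an isomorphism over $C\times B$. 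Hence the two functors are inverse equivalences over $C\times B$.

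For the converse, suppose $\Hom_A(f,g) \simeq_{C\times B} \Hom_B(B,r)$ via $e$. Restricting $e$ along the canonical functor $i\colon C\to\Hom_B(B,r)$ induced by the identity 2-cell $\id_r\colon r\To r\,\id_C$ (which picks out identity arrows) produces $ei\colon C \to \Hom_A(f,g)$ over $C\times B$ with $\dom(ei) = r$ and $\cod(ei) = \id_C$; define $\rho\colon fr\To g$ to be the restriction $\phi(ei)$ of the comma cone. It then remains to see that pasting with $\rho$ is a bijection between 2-cells $b\To rc$ and 2-cells $fb\To gc$ for every $X\xrightarrow{b}B$ and $X\xrightarrow{c}C$. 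The weak universal property of each comma $\infty$-category gives, naturally in $X$, a bijection between isomorphism classes of functors $X\to \Hom_A(f,g)$ (resp.\ $X\to\Hom_B(B,r)$) lying over $(c,b)$ and 2-cells $fb\To gc$ (resp.\ $b\To rc$); post-composition with $e$ and its inverse transports one family to the other over $C\times B$; and chasing the comma cones through identifies the induced bijection on 2-cells with $\beta\mapsto\rho c\cdot f\beta$. This exhibits $(r,\rho)$ as a right adjoint of $f$ relative to $g$.

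The hard part is the genuine weakness of the universal property of comma $\infty$-categories: there is no strict representability, 1-cell induction does not return a unique functor and 2-cell induction does not return a unique 2-cell, so every diagram chase that would be an equality of functors or 2-cells in ordinary 2-category theory must instead be run with isomorphisms and then closed up via 2-cell conservativity. Tracking the naturality in $X$ of the comma-cone correspondence and checking that all the mediating isomorphisms respect the projections to $C\times B$ — so that the final equivalence is genuinely an equivalence over $C\times B$ — is where the care is required; the rest is a formal consequence of the relative-adjunction bijection and the three induction principles. (One could alternatively route everything through the description of relative adjoints as absolute right liftings, but the comma-category formulation is the one the surrounding development needs.)
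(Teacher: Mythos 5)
The paper itself does not prove this theorem---it is quoted from \cite[3.5.8]{RV}---so your proposal can only be measured against the argument there, and in outline it is the same strategy: use the weak universal property of commas (1-cell induction, 2-cell induction, 2-cell conservativity) to translate between fibered functors into a comma and the 2-cells they classify. Your forward direction is essentially complete: the two induced functors act on comma cones by $\beta \mapsto \rho c \cdot f\beta$ and its inverse, so each composite agrees with the identity on comma cones, and 2-cell induction plus conservativity upgrades this to fibered isomorphisms.

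The converse is where your sketch glosses over the one genuinely nontrivial point. You define $\rho$ by restricting the comma cone of $\Hom_A(f,g)$ along $ei$, and then assert that ``chasing the comma cones through'' identifies the bijection induced by $e$ with $\beta \mapsto \rho c\cdot f\beta$. But a priori the given equivalence $e$ has nothing to do with $\rho$ away from the image of $i$: as a fibered functor, $e$ classifies some 2-cell $\chi \colon f\dom' \To g\cod'$, and what you actually need is the identity $\chi = \rho\,\cod' \cdot f\psi$, from which your claim follows by whiskering with the classifying functor of any $\beta$. That identity is not a formal consequence of the three induction principles alone; it uses the further structural fact that the identity-classifying functor $i \colon C \to \Hom_B(B,r)$ is a right adjoint right inverse to $\cod'$ (equivalently, defines a fibered terminal element), whose unit is a 2-cell $\upsilon \colon \id \To i\,\cod'$ satisfying $\cod'\upsilon = \id$ and $\dom'\upsilon = \psi$; naturality of $\phi e$ against $\upsilon$ then forces $\chi = \rho\,\cod'\cdot f\psi$. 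This is precisely the auxiliary lemma \cite{RV} establishes en route to 3.5.8, and without it the converse does not close: an arbitrary equivalence over $C \times B$ need not visibly act by pasting with $\rho$. With that ingredient supplied, your argument is correct and is the intended one.
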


When Theorem \ref{thm:commas} is applied to $f \colon B \to A$ and $\id_A$, it specializes to:

\begin{cor}[{\cite[4.1.1]{RV}}] An $\infty$-functor $f \colon B \to A$ admits a right adjoint $u \colon A \to B$ if and only if $\Hom_A(f,A) \simeq_{A \times B} \Hom_B(B,u)$.
\end{cor}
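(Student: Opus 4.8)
The plan is to obtain this corollary as the special case of Theorem \ref{thm:commas} in which $g$ is the identity functor $\id_A \colon A \to A$ and $C = A$, so that the relative right adjoint becomes an honest right adjoint and the comma $\infty$-category $\Hom_A(f,\id_A)$ is exactly what the statement abbreviates as $\Hom_A(f,A)$.

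First I would recall the observation recorded just after diagram \eqref{eq:counit-abs-lifting}: an $\infty$-functor $f \colon B \to A$ admits a right adjoint $u \colon A \to B$ with counit $\epsilon \colon fu \To \id_A$ if and only if $(u,\epsilon)$ is an absolute right lifting of $\id_A$ through $f$, that is, if and only if $u$ is a right adjoint of $f$ relative to $g = \id_A$ (this is the equivalence between \eqref{eq:counit-abs-lifting} and Definition \ref{defn:adjunction}, cited there as \cite[2.3.7]{RV}). Hence the left-hand condition in the corollary's ``if and only if'' is precisely the hypothesis of Theorem \ref{thm:commas} taken with $C = A$ and $g = \id_A$. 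Applying that theorem then yields the existence of such a relative right adjoint $r = u$ if and only if there is an equivalence $\Hom_A(f,\id_A) \simeq_{A \times B} \Hom_B(B,u)$ over $C \times B = A \times B$; unwinding the pullback of \eqref{eq:comma-cat} with $g = \id_A$ identifies $\Hom_A(f,\id_A)$ with the $\infty$-category written $\Hom_A(f,A)$, which completes the proof.

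I expect no real obstacle here: the argument is a specialization of Theorem \ref{thm:commas}, and the only points needing care are the (already established) fact that a relative right adjoint at $\id_A$ is the same as an ordinary right adjoint, and the purely notational matter of reading $\Hom_A(f,A)$ as $\Hom_A(f,\id_A)$ and $\Hom_B(B,u)$ as $\Hom_B(\id_B,u)$.
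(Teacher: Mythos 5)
Your proposal is correct and matches the paper's own (very brief) argument exactly: the corollary is obtained by specializing Theorem \ref{thm:commas} to $g = \id_A$ and $C = A$, using the observation following \eqref{eq:counit-abs-lifting} that an absolute right lifting of $\id_A$ through $f$ is the same as an honest right adjoint. Nothing further is needed.
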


When Theorem \ref{thm:commas} is applied to $\Delta \colon A \to A^J$ and $d \colon 1 \to A^J$ it specializes to:

\begin{cor}[{\cite[4.3.1]{RV}}] A diagram $d \colon J \to A$ has a limit $\ell \colon 1 \to A$ if and only if $\Hom_{A^J}(\Delta,d) \simeq_A \Hom_A(A,\ell)$.
\end{cor}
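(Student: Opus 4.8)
The plan is to derive this as a direct specialization of Theorem~\ref{thm:commas}, so the ``proof'' is essentially an exercise in matching up notation. First I would recall that, by Definition~\ref{defn:elementwise-limit-colimit}, to say that a diagram $d \colon J \to A$ admits a limit $\ell \colon 1 \to A$ is by definition to say that $(\ell, \lambda_d)$ is a right adjoint to the constant-diagrams functor $\Delta \colon A \to A^J$ relative to the element $d \colon 1 \to A^J$. This is precisely the left-hand condition in Theorem~\ref{thm:commas} applied to the pair of $\infty$-functors $f \coloneqq \Delta \colon A \to A^J$ and $g \coloneqq d \colon 1 \to A^J$, which indeed have a common codomain.

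I would then invoke Theorem~\ref{thm:commas} for this pair. In the notation of that theorem, the ``$B$'' is our $A$, the common codomain is our $A^J$, and ``$C$'' is $1$, so a relative right adjoint is a functor $r \colon 1 \to A$, that is, an element of $A$. The theorem asserts that $\Delta$ admits such an $r$ as a right adjoint relative to $d$ if and only if there is an equivalence over $C \times B = 1 \times A$,
\[ \Hom_{A^J}(\Delta, d) \simeq_{1 \times A} \Hom_A(A, r). \]
Setting $r = \ell$ and combining with the identification of the previous paragraph gives the desired biconditional, up to the cosmetic difference between $1 \times A$ and $A$.

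Finally I would clear up that cosmetic point: since $\ooCAT$ is cartesian closed with terminal object $1$, the projection $1 \times A \to A$ is an equivalence of $\infty$-categories compatible with the structure maps out of the two comma $\infty$-categories, so an equivalence over $1 \times A$ is the same as an equivalence over $A$. I do not expect any genuine obstacle: all of the mathematical weight sits in Theorem~\ref{thm:commas} (and in the definitions of comma $\infty$-category and of relative right adjoint), and the only thing that needs care is the bookkeeping of which $\infty$-category plays the role of ``$A$'' versus ``$B$'' in that theorem, together with the verification that, after collapsing $1 \times A$ to $A$, the resulting equivalence really is over the copy of $A$ on which $\Delta$ and $\ell$ are defined.
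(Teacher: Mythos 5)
Your proposal is correct and is exactly the argument the paper intends: the corollary is stated as the specialization of Theorem~\ref{thm:commas} to $f = \Delta \colon A \to A^J$ and $g = d \colon 1 \to A^J$, with Definition~\ref{defn:elementwise-limit-colimit} identifying ``admits a limit $\ell$'' with the relative right adjoint condition, and $1 \times A \simeq A$ handling the base of the equivalence. No gaps.
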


Here the comma $\infty$-category $\Hom_{A^J}(\Delta,d)$ defines the \emph{$\infty$-category of cones} over $d$. The $\infty$-category $\Hom_A(A,\ell)$ admits a \emph{terminal element} $\id_\ell \colon 1 \to \Hom_A(A,\ell)$,  which defines a right adjoint to the unique functor $! \colon \Hom_A(A,\ell) \to 1$. 
Via the equivalence $\Hom_{A^J}(\Delta,d) \simeq_A \Hom_A(A,\ell)$ we see that the limit cone is terminal in the $\infty$-category of cones. Indeed, $d$ admits a limit if and only if the $\infty$-category of cones admits a terminal element \cite[4.3.2]{RV}.

\subsection*{Could we teach this to undergraduates?} If classical category theory were a standard course in the undergraduate mathematics curriculum, then perhaps students with a particular interest in the subject might go on to learn some 2-category theory, like students with a particular interest in algebra might go on to take more specialized courses in classical algebraic geometry, algebraic number theory, or representation theory when offered. From this point, the proofs of the results stated here are not too difficult.

But these results encompass only a small portion of $\infty$-category theory, and for most of the rest there is a greater jump in complexity when extending from 1-categories to $\infty$-categories. This is best illustrated by considering the  Yoneda lemma, which in an important special case is stated as follows:

\begin{thm}[{\cite[5.7.1]{RV}}]\label{thm:yoneda} Given an $\infty$-category $A$ and elements $a,b \colon 1 \to A$, the $\infty$-groupoid $\Hom_A(a,b)$ is equivalent to the $\infty$-groupoid of functors $\Hom_A(A,a) \to \Hom_A(A,b)$ over $A$.
\end{thm}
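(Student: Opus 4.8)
The plan is to present the equivalence as \emph{evaluation at the identity} and to isolate the single hard input, a representability statement for comma $\infty$-categories. Write $\mathrm{Fun}_{/A}(X,Y)$ for the $\infty$-category of $\infty$-functors $X\to Y$ over $A$. Begin with two reductions. First, the domain projection $p_b \coloneqq \dom \colon \Hom_A(A,b) \to A$ of the comma $\infty$-category is a \emph{right fibration} — a cartesian fibration whose fibres are $\infty$-groupoids — so every $\infty$-natural transformation between $\infty$-functors over $A$ with target $\Hom_A(A,b)$ is invertible; this is exactly why the ``$\infty$-groupoid of functors $\Hom_A(A,a) \to \Hom_A(A,b)$ over $A$'' in the statement really is an $\infty$-groupoid. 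Second, pulling the defining pullback square \eqref{eq:comma-cat} for $\Hom_A(A,b)$ back along $a \colon 1 \to A$ identifies the fibre of $p_b$ over $a$ with $\Hom_A(a,b)$; since $p_b$ is a right fibration this fibre is also the $\infty$-groupoid $\mathrm{Fun}_{/A}(1, \Hom_A(A,b))$ of $\infty$-functors $1 \to \Hom_A(A,b)$ lying over $a$.

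Now use the terminal element $\id_a \colon 1 \to \Hom_A(A,a)$ of the comma $\infty$-category — the identity cone over $a$, exactly as in the discussion of $\infty$-categories of cones preceding the statement. Restricting an $\infty$-functor over $A$ along $\id_a$ produces an $\infty$-functor $1 \to \Hom_A(A,b)$ over $a$, i.e.\ by the previous paragraph a point of $\Hom_A(a,b)$, and this assembles into a map of $\infty$-groupoids $\mathrm{ev}_{\id_a} \colon \mathrm{Fun}_{/A}(\Hom_A(A,a), \Hom_A(A,b)) \to \Hom_A(a,b)$; the theorem is the assertion that $\mathrm{ev}_{\id_a}$ is an equivalence. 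For orientation, a candidate inverse sends a point $\beta \colon a \To b$ of $\Hom_A(a,b)$ to the ``postcomposition'' functor $\Hom_A(A,\beta) \colon \Hom_A(A,a)\to\Hom_A(A,b)$ obtained by pasting $\beta$ (whiskered by $! \colon \Hom_A(A,a) \to 1$) onto the defining $2$-cell of $\Hom_A(A,a)$ and invoking the induced-functor clause of the (weak) universal property of $\Hom_A(A,b)$; because the component of that $2$-cell at $\id_a$ is an identity, the round-trip $\beta \mapsto \Hom_A(A,\beta) \mapsto \mathrm{ev}_{\id_a}\bigl(\Hom_A(A,\beta)\bigr)$ returns $\beta$ essentially by inspection.

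The remaining round-trip — that every $\infty$-functor $F$ over $A$ is recovered, up to equivalence over $A$, from its value $F(\id_a)$ — is the substance of the Yoneda lemma, and I expect it to be the main obstacle. It is really the \emph{representability} statement that $\id_a \colon 1 \to \Hom_A(A,a)$ exhibits the right fibration $p_a$ as freely generated over $A$ by a single point in the fibre over $a$: for \emph{every} right fibration $q \colon F \to A$, restriction along $\id_a$ is an equivalence $\mathrm{Fun}_{/A}(\Hom_A(A,a), F) \xrightarrow{\ \sim\ } F_a$ onto the fibre over $a$. This is precisely where the homotopy-coherence issues advertised throughout the survey bite: it cannot be proved in the bare $2$-category $\ooCAT$, but requires the developed theory of cartesian fibrations inside a model (or axiomatically, in an $\infty$-cosmos) — cartesian lifts of arrows, the recognition of representable right fibrations by a terminal object of the total space whose image is the representing element, and closure of cartesian fibrations under exponentiation by $A$ — which is exactly the extra technical layer that separates $\infty$-category theory from ordinary category theory. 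Granting it, the theorem follows by taking $F = \Hom_A(A,b)$, whose fibre over $a$ was identified with $\Hom_A(a,b)$ in the first paragraph.
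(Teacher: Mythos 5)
The paper offers no proof of this theorem: it is quoted from \cite[5.7.1]{RV} precisely to mark the point where the formal 2-categorical methods of \S\ref{sec:formal} stop sufficing, and the only commentary given is that evaluation at $\id_a$ is the easy direction while the inverse ``is notoriously difficult to construct.'' Measured against that, your preliminary reductions are all correct and consistent with the paper's setup: $\dom\colon\Hom_A(A,b)\to A$ is a right fibration whose fibre over $a$, computed by pasting pullbacks onto \eqref{eq:comma-cat}, is $\Hom_A(a,b)$; this is why the functor $\infty$-category over $A$ is an $\infty$-groupoid; $\id_a$ is a terminal element of $\Hom_A(A,a)$; and evaluation at it lands where it should. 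Your diagnosis of where the difficulty lives is also accurate.

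The gap is that the ``representability statement'' you grant in the final paragraph is not an auxiliary input to the Yoneda lemma --- it \emph{is} the Yoneda lemma. The general form of \cite[5.7.1]{RV} asserts exactly that for every right fibration $q\colon F\to A$ and element $a\colon 1\to A$, restriction along $\id_a\colon 1\to\Hom_A(A,a)$ is an equivalence $\mathrm{Fun}_{/A}(\Hom_A(A,a),F)\simeq F_a$; the theorem as stated in this survey is the special case $F=\Hom_A(A,b)$ (the paper itself says it states ``an important special case''). So your argument reduces the statement to a strictly more general version of itself and proves neither. Relatedly, your candidate inverse $\beta\mapsto\Hom_A(A,\beta)$ is produced only by the \emph{weak} universal property noted after \eqref{eq:comma-cat}: 1-cell induction yields some functor over $A$, non-canonically, and upgrading this pointwise recipe to a map of $\infty$-groupoids --- let alone an inverse equivalence --- is exactly the homotopy-coherence problem the paper is flagging when it says the Yoneda embedding requires ``equipping $A$ with a homotopy coherent composition function.'' A complete proof must actually establish that $\Hom_A(A,a)$ is cofreely generated over $A$ by its terminal element: in \cite{RV} this is done via the calculus of cartesian fibrations and modules in an $\infty$-cosmos, in \cite[\S 5.1.3]{lurie-topos} via a homotopy coherent composition on $A$, and synthetically as in Theorem \ref{thm:yoneda-again}. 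None of that work appears in, or follows formally from, what you have written.
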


One direction of this equivalence is easy to describe: the map from right to left is defined by evaluation at the identity element $\id_a \colon 1 \to \Hom_A(a,a)$. The inverse equivalence defines the Yoneda embedding, which is notoriously difficult to construct in $\infty$-category theory as it involves equipping $A$ with a homotopy coherent composition function \cite[\S 5.1.3]{lurie-topos}.

To achieve further simplifications of $\infty$-category theory, one idea is to ask our foundation system to do more of the work.

\section{A synthetic theory of \texorpdfstring{$\infty$}{infinity}-categories}\label{sec:synthetic}

To explain the desiderata for an alternative foundation, consider the default notions of ``sameness'' for $\infty$-categorical data. Two elements in an $\infty$-category are the same if and only if they are connected by a path in the underlying $\infty$-groupoid, while parallel morphisms are the same if and only if they are connected by an invertible 2-cell, defining a path in the appropriate mapping space. For an $\infty$-categorical construction to be ``well-defined'' it must:
\begin{enumerate}
    \item respect the notions of sameness encoded by paths in suitable spaces, and 
    \item respect equivalences between these spaces themselves, since they are only well-defined as homotopy types.
\end{enumerate} 

\subsection*{Axiomatizing sameness}

In traditional foundations, there is a similar axiom that mathematical constructions or results must respect sameness as encoded by equality. The axioms that define the binary relation ``$=$'' in first-order logic are:
\begin{itemize}
    \item reflexivity: for all $x$, $x=x$ is true.
    \item indiscernibility of identicals: for all $x,y$ and for all predicates $P$, if $x=y$ then $P(x)$ holds if and only if $P(y)$ holds \cite{leibniz}. 
\end{itemize} 

Analogous rules govern Martin-L\"{o}f's \emph{identity types} in an alternative foundational framework for constructive mathematics known as \emph{dependent type theory} \cite{PML}. Here the primitives include \emph{types} like $\NN$, $GL_n(\RR)$, and $\Group$ and \emph{terms} like $17 : \NN$, $I^n : GL_n(\RR)$, $S_n : \Group$, which may depend on an arbitrary \emph{context} of variables drawn from previously defined types (e.g., the $n :\NN$ appearing in three examples above).\footnote{In a mathematical statement of the form ``Let \ldots be  \ldots then \ldots" the stuff following the ``let'' likely declares the names of the variables in the context described after the ``be'', while the stuff after the ``then'' most likely describes a type or term in that context.} In traditional foundations, mathematical structures are defined in the language of set theory, while proofs obey the rules of first-order logic. In dependent type theory, mathematical constructions and mathematical proofs are unified: both are given by terms in appropriate types. The ambient type then describes the sort of object being constructed or the mathematical statement being proven, while the context describes the inputs to the construction or the hypotheses for the theorem.

The rules for Martin-L\"{o}f's identity types include:\footnote{One additional rule, not listed here, concerns the computational behavior of this system.}
\begin{itemize}
    \item identity-formation: in the context of two variables $x,y :A$ there is a type $x=_Ay$. 
\end{itemize}
Types encode mathematically meaningful statements or assertions, so this rule says that it is reasonable to inquire whether $x$ and $y$ might be identified, once $x$ and $y$ are terms in the same type. Certain identity types, such as $3=_\NN 4$, will be empty, since the terms $3$ and $4$ cannot be identified, but the type nevertheless exists.
\begin{itemize}
    \item identity-introduction: for any $x : A$, there is a term $\refl_x : x=_Ax$. 
\end{itemize}
Terms in types are witnesses to the truth of the statement encoded by the type, so this rule corresponds to the reflexivity axiom.
\begin{itemize}
    \item identity-elimination: given any family of types $Q(x,y,p)$ depending on terms $x, y : A$ and $p : x=_Ay$, to provide a family of terms $q_{x,y,p} : Q(x,y,p)$ for all $x,y,p$ it suffices to provide a family of terms $d_x : Q(x,x,\refl_x)$ for all $x$. 
\end{itemize}
This rule implies Leibniz's indiscernibility of identicals and much more besides:

\begin{thm}[{\cite{vBG,lumsdaine}}] The iterated identity types provide any type with the structure of an $\infty$-groupoid, in which the terms are the points, the identifications are the paths, and the higher identifications encode homotopies.
\end{thm}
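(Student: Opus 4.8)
The plan is to construct, for an arbitrary type $A$, all the operations and coherences of a weak $\omega$-groupoid on the globular set whose $n$-cells are the $n$-fold iterated identity types of $A$, using identity-elimination as essentially the only tool. First I would fix a precise target: a globular operad $P$ all of whose operation types are \emph{contractible} in Batanin's sense, so that a $P$-algebra is exactly a Batanin--Leinster weak $\omega$-groupoid. The underlying globular set requires no work — $0$-cells are terms $x : A$, $1$-cells are terms $p : x =_A y$, $2$-cells are identifications between identifications, and so on — with the source, target, and reflexivity maps read off directly from identity-formation and identity-introduction.

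The basic algebraic operations come from path induction. To define the concatenation $p \cdot q : x =_A z$ of $p : x =_A y$ and $q : y =_A z$, identity-elimination reduces the problem to the case $p \equiv \refl_x$, where one sets $\refl_x \cdot q \coloneqq q$; the inverse $p^{-1} : y =_A x$ is defined similarly by sending $\refl_x$ to itself; and the same recipe produces whiskering and composition operations in every dimension. The low-dimensional coherences — left and right unit laws, associativity, and the two inverse laws — are each proved by a further path induction: once every path variable has been reduced to a reflexivity term, the identity type to be inhabited contains $\refl$ by the computation rule. This already exhibits the ``$\pi_1$-style'' structure visible in the informal statement.

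The main obstacle is organizing the \emph{infinite} tower of higher coherences — pentagon and triangle witnesses, their own coherences, and so on — which cannot be written down individually. Here I would follow van den Berg--Garner and Lumsdaine and argue operadically: identity-elimination says precisely that the singleton type $\sum_{y : A}(x =_A y)$ is contractible, and this contractibility is exactly a filling property. One then builds the $P$-action by induction on the dimension of the operad cells, at each stage choosing a value for each new coherence operation among the terms of an iterated identity type that the inductive hypothesis together with path induction renders (essentially) contractible; the operad equations one level down are then automatically witnessed by the next level of cells. Assembling these choices equips the iterated identity types of $A$ with the structure of a $P$-algebra, i.e.\ a weak $\omega$-groupoid. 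I would cite \cite{vBG,lumsdaine} for the two careful executions of this argument, which also pin down the precise sense — a globular weak $\omega$-groupoid — in which the conclusion holds.
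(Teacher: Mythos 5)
The paper does not prove this theorem: it is a survey article that states the result as a citation to van den Berg--Garner and Lumsdaine, and afterwards only describes informally the low-dimensional structure (inverses, concatenation, associativity witnesses, all built by identity-elimination from reflexivity terms) --- exactly the content of your second paragraph. Your outline is a faithful summary of the strategy of the cited works, so there is no conflict with the paper. One place where your sketch is looser than the actual arguments: rather than ``choosing a value for each new coherence operation'' dimension by dimension (which raises the problem of making infinitely many choices coherently and functorially), both references instead define a globular operad $P$ for which $A$ is \emph{tautologically} an algebra --- essentially the endomorphism operad of the generic identity context --- and then prove once and for all that $P$ is contractible in the Batanin--Leinster sense, using the fact that identity-elimination makes the relevant singleton types contractible. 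That inversion of quantifiers is the key technical idea that lets the infinite tower of coherences be handled uniformly, and it is worth making explicit if you want your sketch to reflect how the proofs actually close.
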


In low dimensions, the $\infty$-groupoid structure on the iterated identity types of a type $A$ provides:
\begin{itemize}
    \item functions $ (-)^{-1} : (x=_A y) \to (y=_Ax)$ that invert identifications,
    \item functions $-\ast- :  (x=_Ay) \times (y=_Az) \to (x=_A z)$ that compose identifications,
    \item higher identifications $\textup{assoc} : (p \ast q) \ast r =_{x =_A w} p \ast (q \ast r)$ between composable triples of identifications
\end{itemize}
and much more. All of these terms are constructed using the identity-elimination rule from the reflexivity terms provided by the identity-introduction rule. Since an identification $p : x=_A y$ defines a path from $x$ to $y$ in the $\infty$-groupoid $A$, identifications in dependent type theory are often referred to colloquially as ``paths.''

\subsection*{The homotopical interpretation of dependent type theory}

The homotopical interpretation of dependent type theory was discovered in the early 21st century by Awodey--Warren \cite{AW}
 and Voevodsky \cite{V},   
  building on earlier work of Hofmann and Streicher  \cite{HS}. 
   This connection inspired Voevodsky to make the following definition:

\begin{defn}\label{defn:contractible} A type $A$ is \textbf{contractible} just when there is a term of type \[ \Sigma_{x :A} \Pi_{y : A} x=_A y\]
    which may be read as ``there exists $x:A$, so that for all $y :A$, $x$ is identifiable with $y$'' with all quantifiers interpreted as requiring continuous specifications of data.\footnote{The definition of contractible types makes use of the \emph{dependent pair} and \emph{dependent function} types in dependent type theory. A term of type $\Sigma_{x :A} \Pi_{y : A} x=_A y$ provides a term $c : A$, the ``center of contraction,'' together with a family of paths $p_z : c =_A z$ for all $z : A$, the ``contracting homotopy.''}
\end{defn}

Just as contractible spaces might contain uncountably infinitely many points, contractible types might contain more than one term. But since any two terms in a contractible type may be identified, and since identity-elimination implies that all of dependent type theory respects identifications, the theory behaves as if a contractible type had a unique term.

\subsection*{Univalent foundations}

Martin-L\"{o}f's dependent type theory is a formal system in which all constructions are continuous in paths, our first desideratum. The second desideratum, of well-definedness under equivalences between types, follows once Voevodsky's \emph{univalence axiom} is added to dependent type theory, resulting in a formal system called \emph{homotopy type theory}  or \emph{univalent foundations} \cite{rijke, HoTT}.

Voevodsky's univalence connects two notions of sameness for types. In traditional dependent type theory, types can be encoded as terms in a universe of types.\footnote{Russell's paradox can be resolved by a cumulative hierarchy of universes.} Thus, one notion of sameness between types $A, B$ belonging to a universe $\cU$ is given by paths $p : A=_\cU B$ in the universe. Another notion of sameness is suggested by types as $\infty$-groupoids: for any $A$ and $B$ there is a type $A \simeq B$ of (homotopy) equivalences from $A$ to $B$.\footnote{It is an interesting challenge to define a type whose data witnesses that a given map $f \colon A \to B$ is an equivalence in such a way that this type is contractible whenever it is inhabited \cite[\S4]{HoTT}.}
 By identity-elimination, to define a natural map $\textup{id-to-equiv}: A =_\cU B \to A \simeq B$ it suffices to define the image of $\refl_A : A =_\cU A$, which we take to be the identity equivalence. The \emph{univalence axiom} asserts that this map is an equivalence for all types $A$ and $B$.

By univalence, an equivalence $e : A \simeq B$ gives rise to a path $\ua(e) : A =_\cU B$, which can then be used to transport terms in any type involving $A$ to terms in the corresponding type involving $B$. Thus, univalent mathematics is automatically invariant under equivalence between types.

\subsection*{\texorpdfstring{$\infty$}{Infinity}-categories in univalent foundations}

There is an experimental exploration  of $\infty$-category theory \cite{RS,BuW,CBM} in an extension of homotopy type theory in which every type $A$  has a family of arrows $\Hom_A(x,y)$  in addition to the family of paths $x=_Ay$. These types are obtained from a new type-forming operation that produces  \emph{extension types}, whose terms are type-valued diagrams that strictly extend a given diagram along an inclusion of ``shapes'' (polytopes embedded in directed
cubes constructed in the theory of a strict interval) \cite[\S2-4]{RS}. Extension types include types analogous to the pullbacks of \eqref{eq:comp-space} and \eqref{eq:comma-cat}.

The formal system has semantics in complete Segal spaces \cite{RS,Weinberger} so it provides a rigorous way to prove theorems about $\infty$-categories as understood in traditional foundations. At the same time, the experience of working with $\infty$-categories in this ``univalent'' foundational setting is much more akin to the experience of working with 1-categories in traditional foundations.

It takes work to set up this formal system and considerably more to describe its interpretation in complete Segal spaces; indeed, this is where the hard work of solving homotopy coherence problems goes. But once this is done,  it is possible to define the notion of an $\infty$-category, something that would no doubt be reassuring to undergraduates hoping to learn about them.\footnote{Recall that in the formal $\infty$-category theory developed in the 2-category $\ooCAT$, a concrete definition of $\infty$-categories is not used.} The two axioms express the ``Segal'' and ``completeness'' conditions of Rezk's model \cite{rezk-CSS}, respectively. 

\begin{defn}[{\cite[5.3,10.6]{RS}}] An $\infty$-\textbf{category} is a type $A$ in which:
    \begin{itemize}
        \item every composable pair of arrows has a unique composite, and
        \item for any pair of terms $x, y : A$ the natural map $x=_A y \to x \cong_A y$ from paths in $A$ to isomorphisms in $A$ is an equivalence.
    \end{itemize}
\end{defn}

The ``uniqueness'' in the first axiom is in the sense of Definition \ref{defn:contractible}: what it asserts is that a suitable space of composites analogous to Definition \ref{defn:space-of-composites} is contractible---the default meaning of uniqueness in this formal system. Like in the space defined by \eqref{eq:comp-space}, a term in the space of composites provides a higher-dimensional witness to the composition relation. A composition function \[\circ \colon \Hom_A(y,z) \times \Hom_A(x,y) \to \Hom_A(x,z),\] is obtained by throwing away these witnesses. Like any construction in homotopy type theory, this function respects identifications and is therefore well-defined. It follows from this axiom that composition is associative and unital, using canonical identity arrows $\id_x : \Hom_A(x,x)$.

The second axiom involves a type of isomorphisms in $A$, which may be defined in a similar manner to the type of equivalences between types \cite[\S 10]{RS}.

\subsection*{The Yoneda lemma}

The advantages of this synthetic framework for $\infty$-category theory are on display when comparing the proofs of the Yoneda lemma in \cite[5.7.1]{RV} and in \cite[9.1]{RS}. We sketch the latter here in the same special case considered in Theorem \ref{thm:yoneda}.

\begin{thm}[{\cite[9.1]{RS}}]\label{thm:yoneda-again} 
    Given an $\infty$-category $A$ and elements $a,b \colon 1 \to A$, the type $\Hom_A(a,b)$ is equivalent to the type $\Pi_{x :A} \Hom_A(x,a) \to \Hom_A(x,b)$ of fiberwise functions.
\end{thm}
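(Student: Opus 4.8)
The proof I would give is the standard Yoneda argument, translated into the synthetic framework, with "uniqueness up to contractible choice" doing the work that homotopy-coherence machinery does in the analytic proof. The two maps are: from right to left, evaluation at the identity arrow $\id_a : \Hom_A(a,a)$, which sends a fiberwise function $\phi : \Pi_{x:A}\Hom_A(x,a)\to\Hom_A(x,b)$ to $\phi_a(\id_a) : \Hom_A(a,b)$; and from left to right, the map sending $f : \Hom_A(a,b)$ to the fiberwise function that postcomposes with $f$, i.e.\ $x \mapsto (g \mapsto f \circ g)$, where $\circ$ is the composition function extracted from the Segal condition of the $\infty$-category $A$. Both are manifestly functions in the type theory, hence automatically respect identifications; the content is that they are mutually inverse.

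\textbf{Key steps.} First I would verify the easy round-trip: starting from $f : \Hom_A(a,b)$, postcompose to get $g\mapsto f\circ g$, then evaluate at $\id_a$, obtaining $f\circ\id_a$; unitality of composition (which follows from the Segal axiom) gives a path $f\circ\id_a =_{\Hom_A(a,b)} f$, and this path is natural in $f$, so this composite is identifiable with the identity. Second, and this is the crux, I would verify the other round-trip: starting from $\phi$, form $f \coloneqq \phi_a(\id_a)$, and must produce, for every $x : A$ and every $g : \Hom_A(x,a)$, an identification $\phi_x(g) =_{\Hom_A(x,b)} f\circ g = \phi_a(\id_a)\circ g$. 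The way to get this is \emph{transport along $g$}: an arrow $g : \Hom_A(x,a)$ should be thought of as a path-like datum that lets one move in the "representable" fibration $\Hom_A(-,a)$, and naturality of $\phi$ with respect to this transport forces $\phi_x(g)$ to agree with the image of $\phi_a(\id_a)$ under postcomposition by $g$. Concretely, in the synthetic setup this is the dependent eliminator for the representable family: $\Hom_A(-,a)$ is (by the completeness/Segal conditions) a \emph{covariant family} over $A$ that is freely generated by the single term $\id_a$ in its fiber over $a$, so any fiberwise map out of it is determined by where $\id_a$ goes — which is exactly the statement that evaluation at $\id_a$ is an equivalence. I would cite the relevant "dependent Yoneda" or "transport" lemma from \cite[\S8--9]{RS} rather than re-derive it.

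\textbf{Main obstacle.} The real work is the second round-trip, and specifically establishing that the representable family $\Hom_A(-,a)$ is "freely generated" by $\id_a$ in the appropriate sense — i.e.\ that covariant families admit a transport operation along arrows and that this transport, applied to $\id_a$, is initial. This is where the Segal condition (giving composition, hence the action of arrows by transport) and a form of the completeness condition (ensuring the transport is well-behaved on isomorphisms, so the family is genuinely covariant rather than merely a graph) get used; in the analytic proof \cite[5.7.1]{RV} the corresponding difficulty is hidden inside the construction of the Yoneda embedding and its homotopy-coherent composition. In the synthetic proof it is isolated into a clean statement about covariant families, proved once by extension-type manipulations, after which the Yoneda lemma itself is a short formal consequence. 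So the proof I would write is: (1) define the two maps; (2) note both are type-theoretic functions, hence equivalence-invariant for free; (3) compute one composite via unitality; (4) compute the other composite by invoking the transport/dependent-Yoneda property of representable covariant families; (5) check the two homotopies are coherent, which is automatic because everything in sight is a term in dependent type theory.
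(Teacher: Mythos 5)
Your proposal matches the paper's argument essentially step for step: the same pair of maps (evaluation at $\id_a$ and postcomposition), the easy round-trip via unitality $f\circ\id_a = f$, and the harder round-trip via the automatic naturality of fiberwise maps, which is exactly the identification $\phi_a(\id_a)\circ g =_{\Hom_A(x,b)} \phi_x(\id_a\circ g)$ followed by applying $\phi_x$ to $\id_a\circ g =_{\Hom_A(x,a)} g$, with function extensionality reducing the claim to a pointwise one. One small quibble: in \cite{RS} the covariance of the representable family and hence the ``transport'' you invoke needs only the Segal condition, not the completeness axiom, so that part of your appeal is unnecessary (and your final ``coherence of the two homotopies'' check is likewise not needed to exhibit an equivalence).
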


A key difference between Theorem \ref{thm:yoneda} and Theorem \ref{thm:yoneda-again} is that in the present framework it is straightforward to define the inverse equivalence to the evaluation at identity map. The inverse equivalence takes an arrow $f: \Hom_A(a,b)$ to the natural map $x \mapsto g \mapsto f \circ g : \Pi_{x:A} \Hom_A(x,a) \to \Hom_A(x,b)$. The usual proof then demonstrates that these maps are inverse equivalences. In one direction, this is given by an identification $f \circ \id_a =_{\Hom_A(a,b)} f$, which expresses the fact that composition in an $\infty$-category is unital. In the other direction, we must show that a fiberwise map $\phi$ agrees with the map whose component at $x$ is given by $g \mapsto  \phi_a(\id_a) \circ g: \Hom_A(x,a) \to \Hom_A(x,b)$. By a consequence of the univalence axiom called ``function extensionality,'' it suffices to show that $\phi_a(\id_a) \circ g =_{\Hom_A(x,b)} \phi_x(g)$. This follows from the fact that fiberwise maps are automatically ``natural,'' providing an identification $\phi_a(\id_a) \circ g =_{\Hom_A(x,b)} \phi_x(\id_a \circ g)$. Since the function $\phi_x$ respects the identification $\id_a \circ g =_{\Hom_A(x,a)} g$, we obtain the required identification.

In fact, \cite[9.5]{RS} proves a generalization of the Yoneda lemma  that was first discovered with this formal system and later proven in traditional foundations \cite[5.7.2]{RV}

\subsection*{Conclusion}

Significant technical problems remain to make it feasible to teach  $\infty$-category theory to undergraduates, but I have hope that in the future the subject  will not seem as forbidding as it does today.

\end{document}